\newtheorem{theorem}{Theorem}[section]
\newtheorem{lemma}[theorem]{Lemma}
\theoremstyle{definition}
\newcommand{\nn}{\nonumber}
\newcommand{\beq}{\begin{equation}}
\newcommand{\beqa}{\begin{eqnarray}}
\newcommand{\eeq}{\end{equation}}
\newcommand{\eeqa}{\end{eqnarray}}
\newcommand{\jri}{j \rightarrow \infty}
\newcommand{\Nb}{{\mathbb N}}
\newcommand{\Rb}{{\mathbb R}}
\newcommand{\sgn}{\operatorname{sgn}} 
\newcommand{\T}{\tau}
\renewcommand{\t}{\tau}
\title[On the convergence to critical scaling profiles]{On the convergence to critical scaling profiles 
in submonolayer deposition models}
\author[F.P. da Costa, J.T. Pinto and R. Sasportes]{}
\subjclass{Primary: 34C11, 34C20, 34C45, 34D05; Secondary: 82C21.}
\keywords{Dynamics of ODEs, Coagulation processes, Convergence to scaling behaviour, 
 Asymptotic evaluation of integrals, Submonolayer deposition model.}
\email{fcosta@uab.pt}
\email{jpinto@tecnico.ulisboa.pt}
\email{rafael@uab.pt}
\thanks{Part of this work was done while FPdC was staying at the Department of Mathematics of the
Faculty of Natural Sciences of the National University of Laos, Vientiane, Laos, with the support of an 
Erasmus Mundus Mobility with Asia grant EMMA ID 2601. The hospitality of NUOL is gratefully acknowledged.}
\thanks{Partially funded by FCT/Portugal through project RD0447/CAMGSD/2015.}
\thanks{$^*$ Corresponding author: F.P. da Costa}
\dedicatory{Dedicated to the memory of Jack Carr}
\begin{document}
\maketitle

%
%
%
\centerline{\scshape Fernando P. da Costa$^*$}
\medskip
{\footnotesize
 \centerline{Departamento de Ci\^encias e Tecnologia, Universidade Aberta, Lisboa, Portugal}
 \centerline{and}
   \centerline{Centro de An\'alise Matem\'atica, Geometria e Sistemas Din\^amicos, }
   \centerline{Instituto Superior T\'ecnico,
Universidade de Lisboa, Lisboa, Portugal}
} 

\medskip

\centerline{\scshape Jo\~ao T. Pinto}
\medskip
{\footnotesize
   \centerline{Departamento de Matem\'atica, Instituto Superior T\'ecnico,
Universidade de Lisboa, Lisboa, Portugal}
\centerline{and}
   \centerline{Centro de An\'alise Matem\'atica, Geometria e Sistemas Din\^amicos, }
   \centerline{Instituto Superior T\'ecnico,
Universidade de Lisboa, Lisboa, Portugal}
}

\medskip

\centerline{\scshape Rafael Sasportes}
\medskip
{\footnotesize
 \centerline{Departamento de Ci\^encias e Tecnologia, Universidade Aberta, Lisboa, Portugal}
 \centerline{and}
   \centerline{Centro de An\'alise Matem\'atica, Geometria e Sistemas Din\^amicos, }
   \centerline{Instituto Superior T\'ecnico,
Universidade de Lisboa, Lisboa, Portugal}
} 
\bigskip
 \centerline{July 8, 2017}

\begin{abstract}
In this work we study the rate of convergence to similarity profiles in a mean field model
for the deposition of a submonolayer of atoms in a crystal facet, when there is a critical minimal 
size $n\geq 2$ for the stability of the formed clusters. 
The work complements recently published related results by the same authors
in which the rate of convergence 
was studied outside of a critical direction $x=\tau$ in the cluster size $x$ vs. time $\t$ plane. 
In this paper we consider a
different similarity variable, $\xi := (x-\t)/\sqrt{\t}$, corresponding to an inner expansion of that critical 
direction, 
and prove the convergence of solutions to a similarity profile $\Phi_{2,n}(\xi)$ when 
$x, \t\to +\infty$ with $\xi$ fixed, as
well as the rate at which the limit is approached.  
\end{abstract}







\section{Introduction}\label{sec:intr}

The deposition of a monolayer of atoms on top of a single crystal facet is a process 
of clear technological importance, and its theoretical understanding is a theme of 
current scientific interest (see, e.g., \cite{Mul09}).

\medskip

In the early stages of the deposition process the new monolayer is still far from being completed
and only islands, or $j$-clusters, made up of a number $j$ of adatoms exist, 
which, being far apart from each other on the crystal facet, 
do not interact among them. 

\medskip

One of the mathematical descriptions of this stage of the deposition process consists in a coagulation type 
differential equation \cite{crw,costin} obtained as follows: denote by $c_j=c_j(t)$ the 
concentration of $j$-clusters at time $t$ on the
crystal facet. The surface is hit at a constant rate $\alpha>0$ by 
1-clusters (also called \emph{monomers}). Consider the
non interacting assumption above. Then,  
the only reactions of the clusters on the crystal surface are those in which 
a monomer takes part, namely $(1)+(j)\rightarrow (j+1)$, with rate coefficients $a_{1,j}\geq 0$, 
for $j\in\{1,2,\ldots\}$. Thus, the differential equations governing the dynamics of the
population of clusters $\{c_j(t)\}$ are a Smoluchowski coagulation system with 
Becker-D\"oring like coagulation kernel (see, e.g., \cite{c15,w06}).

\medskip

An additional assumption relevant to some applications is the existence of a critical 
cluster size $n$ below which the clusters on the crystal facet are not stable and do not 
exist in any significant amount in the time scale
of the coagulation reactions. Mathematically, this behaviour can be described as follows, \cite{costin}:
clusters of size larger than $1$ and smaller than or equal to $n-1$ cannot exist, and thus the smaller
cluster that is not a monomer has size $n$ and is formed when $n$ monomers come together and react
into an $n$-cluster. These ``multiple collisions'' can be though of as follows \cite{cps}: if, as in Monte Carlo
simulations, we consider the monomers sitting in the vertices of a lattice, 
and if we have $n-1$ monomers surrounding an empty site -- as in the four nearest
neighbours in a square lattice -- which is suddenly hit by a monomer, 
we have one of these ``multiple'' collisions and the creation of an 
$n$-cluster in a single $n$ body $1$-cluster reaction.

\medskip

Considering all
coagulation rates time and cluster size independent ($a_{1,j}\equiv 1$, say), the mean-field model 
for the submonolayer deposition with a critical cluster size $n$ is given by the following coagulation
type ordinary differential equation system:

\beq
\left\{
\begin{array}{lcl}
\dot{c}_1 & = & \alpha - nc_1^n - c_1 \displaystyle{\sum_{j=n}^{\infty}c_j}, \\
\dot{c}_n & = & c_1^n-c_1c_n,  \\
\dot{c}_j & = & c_1c_{j-1}-c_1c_j, \;\;\; j \geq n+1. 
\end{array}
\right.\label{system}
\eeq

The convergence of solutions of \eqref{system} to a scaling profile was studied in \cite{costin}. 
Using the new time scale 
\begin{equation}
\t(t):=\int_{t_0}^tc_1(s)ds\label{newtime}
\end{equation}
it was proved that solutions of \eqref{system} in this time scale, 
\begin{equation}
\widetilde{c}_j(\t):=c_j(t(\t)), \label{solnewtime}
\end{equation}
satisfy the following scaling behaviour:
\[
\displaystyle{
\lim_{\scriptsize{\begin{array}{c}j,\,\t \rightarrow +\infty\\ \eta=j/\t\; \mbox{\rm  fixed}\\ 
\eta \neq 1\end{array}}}\!\!
\left(\frac{n\t}{\alpha}\right)^{(n-1)/n}\!\!
\widetilde{c}_j(\t) = \Phi_{1,n}(\eta) :=
\left\{\begin{array}{ll} \left(1-\eta\right)^{-(n-1)/n},& \mbox{\rm  if $\;0<\eta<1$}\\
0, & \mbox{\rm  if $\;\eta>1$}.
\end{array}\right.
}
\]
\noindent
This extended the result in \cite{crw}, valid for the absence of critical 
cluster sizes (i.e., when $n=2$).
Exploiting techniques based on center manifold theory used in \cite{crw}, we recently
proved in \cite{cps} results  about the rate of convergence to this scaling behaviour.

\medskip

Along the direction $\eta:= j/\t = 1$  a different scaling variable
is needed, which was identified 
in \cite{crw} as
$\xi := (j-\t)/\sqrt{\t} \in \Rb$. Observing solutions of \eqref{system} along $\xi = \text{const.}$
corresponds to look at them with $\eta\to 1$ as $\t\to\infty$ and so, in a certain sense, the new
scaling variable $\xi$ blows up to the whole real line the singular behaviour identified at the
critical value $\eta=1$.

%
%
\begin{figure}[h]\label{fig:sslines}
	\begin{center}
		\psfrag{j}{$j$}
		\psfrag{tau}{$\tau$}
		\psfrag{n=2}{$n=2$}
		\psfrag{n=3}{$n=3$}
		\psfrag{n=6}{$n=6$}
		\psfrag{n=12}{$n=12$}
		\psfrag{xi}{$\xi$}
		\psfrag{-6}{$-6$}
		\psfrag{-4}{$-4$}
		\psfrag{-2}{$-2$}
		\psfrag{0}{$0$}
		\psfrag{2}{$2$}
		\psfrag{4}{$4$}
		\psfrag{0.5}{$0.5$}
		\psfrag{1.0}{$1.0$}
		\psfrag{1.5}{$1.5$}
		\psfrag{2.0}{$2.0$}
		\psfrag{1.0}{$1.0$}
		\psfrag{1.2}{$1.2$}
		\includegraphics[scale=1]{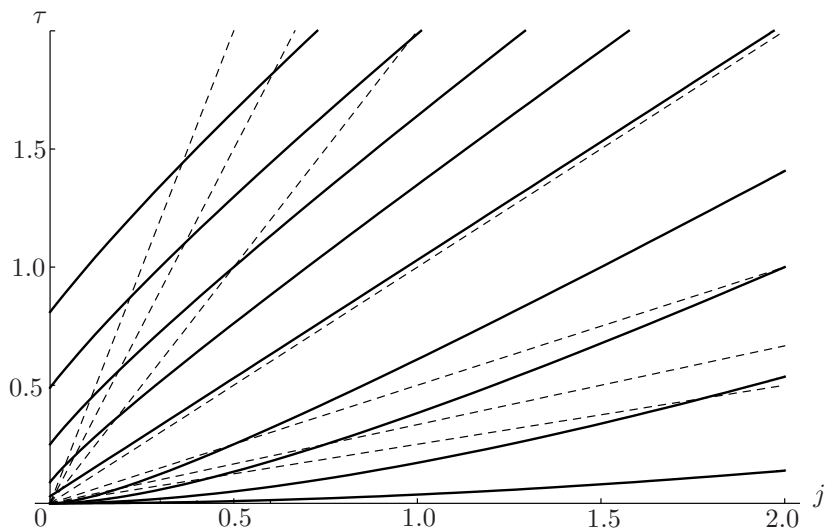}
	\end{center}
	\caption{Lines with $\xi = \text{constant}$ (full), and with $\eta = \text{constant}$ (dashed) in the $(j,\tau)$ plane. 
	The values used for these parameters are the following, in
	counterclockwise direction: $\xi = 5.0, 2.0, 1.0, 0.5, 0.0, -0.3, -0.5, -0.7, -0.9$ and 
	$\eta = 1/4, 1/3, 1/2, 
	1, 2, 3, 4$. }
\end{figure}
%
%

In \cite{crw} the convergence to a similarity profile in this new similarity variable was proved for
the case with no critical size clusters ($n=2$). In this paper we extend that result to 
general $n\geq 2$ by proving that
$$
\lim_{\substack{j,\,\tau \rightarrow +\infty\\ \xi=(j-\tau)/\sqrt{\tau}\; {\rm fixed}\\ \xi\in\mathbb{R}}}
\frac{\sqrt{2\pi}}{\alpha}\left(\frac{\alpha}{n}\right)^{\frac{1}{n}}\tau^{\frac{n-1}{2n}}
\widetilde{c_j}(\tau)= \Phi_{2,n}(\xi)
$$
where
$$
\Phi_{2,n}(\xi):=e^{-\xi^2/2}\int_{0}^{+\infty}
\exp\left(-\xi w^n-\frac{w^{2n}}{2}\right)\,dw\,.
$$

%
%
\begin{figure}[ht!]\label{fig:phi2n}
\begin{center}
	\psfrag{n=2}{$n=2$}
	\psfrag{n=3}{$n=3$}
	\psfrag{n=6}{$n=6$}
	\psfrag{n=12}{$n=12$}
	\psfrag{xi}{$\xi$}
	\psfrag{-6}{$-6$}
	\psfrag{-4}{$-4$}
	\psfrag{-2}{$-2$}
	\psfrag{0}{$0$}
	\psfrag{2}{$2$}
	\psfrag{4}{$4$}
	\psfrag{0.2}{$0.2$}
	\psfrag{0.4}{$0.4$}
	\psfrag{0.6}{$0.6$}
	\psfrag{0.8}{$0.8$}
	\psfrag{1.0}{$1.0$}
	\psfrag{1.2}{$1.2$}
	\includegraphics[scale=0.75]{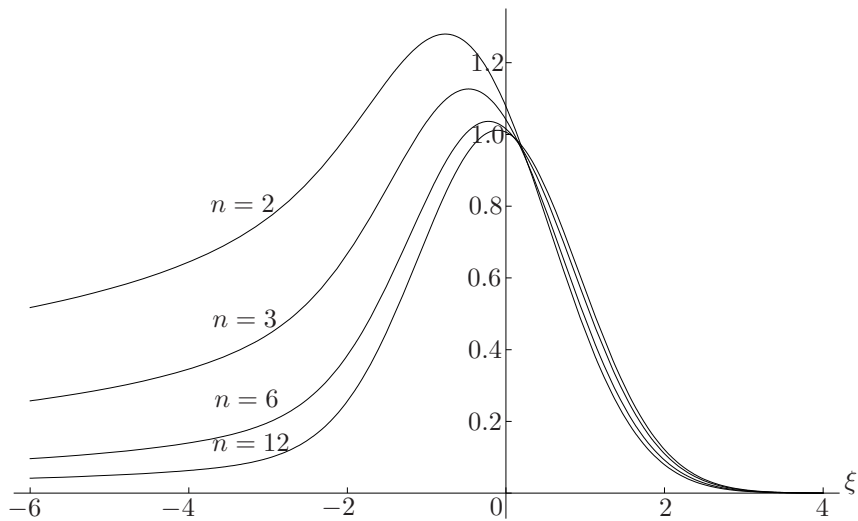}
\end{center}
\caption{Graph of the similarity profile $\Phi_{2,n}(\xi)$ for different values of $n$.}
\end{figure}
%
%

The rate of convergence to similarity profiles in several types of 
coagulating systems has recently attracted some attention (see, e.g., \cite{mp,sri}). 
As stated above, for the deposition model \eqref{system} we recently studied in \cite{cps}
the rate of convergence to the profile $\Phi_{1,n}$ in the 
similarity variable $\eta$. In the present paper we complement that work by studying the rate of convergence
to $\Phi_{2,n}$ in the variable $\xi$, thus completing the study in \cite{cps,crw,costin}.

\medskip

Our main result is the following:

\begin{theorem}\label{teo:main} 
Let $n\geq 2$. For every solution $(c_j)$ of \eqref{system} with initial condition $(c_{j0})$
consider the change of variables $(t, c_j)\mapsto (\tau,\widetilde{c}_j)$ introduced 
in \eqref{newtime} and \eqref{solnewtime}. 
If there exists $\rho_1, \rho_2>0, \mu > 1-\frac{1}{n}$ such that
for all $j\geq n$, $j^{\mu}c_j(0)\in [\rho_1, \rho_2]$ then,
as $j,\,\tau \rightarrow +\infty,$ with $\xi=\frac{j-\tau}{\sqrt{\tau}}$ fixed, the following holds:
\begin{eqnarray}
\left|\frac{\sqrt{2\pi}}{\alpha}\left(\frac{\alpha}{n}\right)^{\frac{1}{n}}\tau^{\frac{n-1}{2n}}
\widetilde{c}_j(\tau) - \Phi_{2,n}(\xi)\right| & \leq & 
\begin{cases}
O(j^{-\tfrac{1}{2n}+\frac{1-\mu}{2}}) & \text{if $1-\frac{1}{n}<\mu<1$}\\
O(j^{-\tfrac{1}{2n}}\log j) & \text{if $\mu=1$}\\
O(j^{-\tfrac{1}{2n}}) & \text{if $\mu>1$.}
\end{cases} \label{maintheo}
\end{eqnarray}
\end{theorem}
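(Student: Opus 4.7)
My plan is to use the linearity of the transformed equation $\widetilde{c}_j' = \widetilde{c}_{j-1} - \widetilde{c}_j$ for $j\geq n+1$ (prime denoting $d/d\tau$). Iterating variation of constants from index $j$ down to $n+1$ with $\widetilde{c}_n$ as the ultimate source produces, writing $a:=j-n-1$,
\begin{equation*}
\widetilde{c}_j(\tau) = \sum_{l=0}^{a}\frac{e^{-\tau}\tau^l}{l!}\widetilde{c}_{j-l}(0) + \int_0^\tau \frac{e^{-(\tau-s)}(\tau-s)^{a}}{a!}\,\widetilde{c}_n(s)\,ds.
\end{equation*}
The first term encodes the advection of the initial condition along the $\xi$-characteristics; the second is a Duhamel integral driven by the behaviour of $\widetilde{c}_n(s)$, which by \cite{costin} satisfies $\widetilde{c}_n(s)\sim(\alpha/n)^{(n-1)/n}s^{-(n-1)/n}$ as $s\to\infty$ with the $\mu$-dependent rate supplied by \cite{cps}.

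The initial-data sum is a truncated Poisson$(\tau)$ expectation of $\widetilde{c}_{j-l}(0) \leq \rho_2 (j-l)^{-\mu}$. Writing $l=\tau+y\sqrt{\tau}$ and passing to the Gaussian approximation, this average is comparable to
\begin{equation*}
\frac{\rho_2}{\sqrt{2\pi}\,\tau^{\mu/2}}\int_{-\infty}^{\xi - O(\tau^{-1/2})} e^{-y^2/2}(\xi-y)^{-\mu}\,dy.
\end{equation*}
The $y$-integral is finite for $\mu<1$; it behaves like $\log\tau$ when $\mu=1$; and when $\mu>1$ its near-$y=\xi$ singularity, cut off at distance $\sim\tau^{-1/2}$ by the truncation, produces a factor $\tau^{(\mu-1)/2}$. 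After multiplying by the normalization $\tau^{(n-1)/(2n)}$ these three regimes yield exactly the three rates in \eqref{maintheo}, so that the initial-data contribution alone saturates the claimed error.

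For the Duhamel integral, the crucial observation is that $s^{-(n-1)/n}ds=n\,dw$ under $s=w^n$, so the leading-order source term becomes
\begin{equation*}
(\alpha/n)^{(n-1)/n}\!\int_0^\tau \frac{e^{-(\tau-s)}(\tau-s)^a}{a!}s^{-(n-1)/n}ds = n(\alpha/n)^{(n-1)/n}\!\int_0^{\tau^{1/n}}\!\!\frac{e^{-(\tau-w^n)}(\tau-w^n)^a}{a!}dw.
\end{equation*}
Rescaling $w=\tau^{1/(2n)}v$ matches the integration variable to the natural width $\sqrt{\tau}$ of the Poisson-type kernel. Applying Stirling to $a!$ and Taylor-expanding the logarithm of the resulting density about $v=0$ (using $a=\tau+\xi\sqrt{\tau}-n-1$) gives
\begin{equation*}
\frac{e^{-(\tau-\sqrt{\tau}v^n)}(\tau-\sqrt{\tau}v^n)^a}{a!}=\frac{e^{-\xi^2/2}}{\sqrt{2\pi\tau}}\exp\bigl(-\xi v^n - \tfrac{1}{2}v^{2n}\bigr)\bigl(1+O(\tau^{-1/2})\bigr),
\end{equation*}
uniformly on compact $v$-sets. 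Integration in $v$ reconstructs $\tau^{-(n-1)/(2n)}\Phi_{2,n}(\xi)/\sqrt{2\pi}$ multiplied by the expected $\alpha$-prefactor; the $O(\tau^{-1/2})$ Laplace remainder is strictly subdominant to the claimed $j^{-1/(2n)}$ rates.

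The residual difficulty, and the main technical work, is to control two secondary effects uniformly in $\xi\in\mathbb{R}$. First, the near-zero portion of the $s$-integral, where $s^{-(n-1)/n}$ is singular but $\widetilde{c}_n$ is bounded: a split at a threshold $s_0$ bounds this piece by $\|\widetilde{c}_n\|_\infty \cdot s_0\cdot\sup_{s\in[0,s_0]}\frac{e^{-(\tau-s)}(\tau-s)^a}{a!}=O(s_0\,\tau^{-1/2})$, which is harmless for $s_0$ of order $1$. Second, the tail term generated by $R(s):=\widetilde{c}_n(s)-(\alpha/n)^{(n-1)/n}s^{-(n-1)/n}$: the $\mu$-dependent decay of $R$ from \cite{cps} is precisely what couples the integral estimate to the initial-data regime already analyzed, so that these contributions are absorbed into the same three-case bound. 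The hardest part is keeping every $O$-constant uniform as $|\xi|\to\infty$: the Taylor expansion of the logarithm is only valid while $v^n/\sqrt{\tau}$ is small, so one must cut the $v$-integral at a window growing slowly with $\tau$ and use the super-Gaussian tail decay of $e^{-\xi v^n - v^{2n}/2}$ to discard the remaining region — a delicate estimate when $\xi$ is large and negative, since the exponent then fails to be monotone in $v$.
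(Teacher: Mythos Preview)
Your overall architecture matches the paper's: decompose $\widetilde{c}_j$ into the initial-data sum and the Duhamel integral, show the integral produces $\Phi_{2,n}$ plus an error of order $j^{-1/(2n)}$, and show the initial-data sum supplies the $\mu$-dependent three-case bound which dominates. Using $\widetilde{c}_n$ as the Duhamel source rather than $\widetilde{c}_1^{\,n-1}$ is a harmless reparametrization (one further convolution step reconciles the two formulas exactly).

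There is one misattribution you should correct. The remainder $R(s)=\widetilde{c}_n(s)-(\alpha/n)^{(n-1)/n}s^{-(n-1)/n}$ does \emph{not} carry $\mu$-dependent decay: the asymptotics of $\widetilde{c}_1$ (hence of $\widetilde{c}_n$) are governed entirely by the two-dimensional $(c_1,y)$ system, which sees the initial data only through the bulk quantity $\nu_0=\sum_{k\geq n}c_k(0)$, not through the decay exponent $\mu$. In the paper the integral part contributes the correction $-e^{-\xi^2/2}(\alpha/n)^{1/n}\nu_0/(\alpha\tau^{1/(2n)})$, a $\mu$-independent $O(j^{-1/(2n)})$, plus a faster $O(\tau^{-1/2+1/(2n)}\log\tau)$ remainder. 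This is absorbed into the three-case bound simply because $j^{-1/(2n)}$ is never faster than any of the three rates; the ``coupling'' you describe does not occur, and the three cases arise \emph{solely} from the initial-data sum.

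Two smaller points. First, the theorem is for \emph{fixed} $\xi$, so uniformity of the $O$-constants as $|\xi|\to\infty$ is not required; the ``delicate estimate'' you flag at the end is not needed. Second, your Gaussian approximation of the Poisson-weighted sum is heuristically right but needs a rigorous local-CLT or Stirling-with-remainder argument; the paper sidesteps this by a more elementary device: split the sum at $j^\star=(j-n)-(1+|\xi|)\sqrt{j-n}$, use the pointwise bound $(xe^{1-x})^\ell\leq 1$ on the large-$\ell$ part to discard the Gaussian concentration entirely, and reduce to the Darboux estimate for $\sum_{k=n}^{j-j^\star-1}k^{-\mu}$. Your route yields the same rates but with more analytic overhead.
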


\medskip

As a consequence of the proof of this theorem, we obtain also the following result valid for an 
important case in applications: the deposition of monomers on top of
a crystal facet having no $j$-cluster at the initial time $t=0$, i.e., with initial conditions $c_j(0)=0$
for all $j\geq n$:

\begin{theorem}\label{teo:monomer} 
With the notation used in Theorem~\ref{teo:main} consider initial data with $c_{j0}=0$ for all
$j\geq n$. Then,
as $j,\,\tau \rightarrow +\infty,$ with $\xi=\frac{j-\tau}{\sqrt{\tau}}$ fixed, it holds:
\begin{eqnarray}
\left|\frac{\sqrt{2\pi}}{\alpha}\left(\frac{\alpha}{n}\right)^{\frac{1}{n}}\tau^{\frac{n-1}{2n}}
\widetilde{c}_j(\tau) - \Phi_{2,n}(\xi)\right| & \leq & Cj^{-\tfrac{1}{2}}\log j. 
 \label{theomonomer}
\end{eqnarray}
\end{theorem}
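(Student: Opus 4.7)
The structure of the proof will be extracted from the argument for Theorem~\ref{teo:main}: in the special case $c_{j0}=0$ for $j\ge n$, the integral representation underlying that argument collapses to just its source-term component, which can then be analyzed on its own with a better rate.

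First I would rewrite the subsystem for $j\geq n$ in the rescaled time $\tau$,
\[
\widetilde{c}_n'(\tau) = \widetilde{c}_1^{n-1}(\tau) - \widetilde{c}_n(\tau),\qquad
\widetilde{c}_j'(\tau) = \widetilde{c}_{j-1}(\tau) - \widetilde{c}_j(\tau),\quad j\ge n+1,
\]
and iterate the variation of constants formula on this triangular linear system to obtain, for arbitrary initial data,
\[
\widetilde{c}_j(\tau) = \sum_{k=n}^{j} e^{-\tau}\frac{\tau^{j-k}}{(j-k)!}\,\widetilde{c}_k(0) + \int_0^\tau e^{-(\tau-u)} \frac{(\tau-u)^{j-n}}{(j-n)!}\,\widetilde{c}_1^{n-1}(u)\,du.
\]
Under the hypothesis $c_{j0}=0$ for $j\ge n$ the first sum vanishes. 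That sum is precisely what, in the proof of Theorem~\ref{teo:main}, carries the $\mu$-dependent error $O(j^{-1/(2n)+(1-\mu)/2})$ from \eqref{maintheo}, and its disappearance is what permits the improved exponent $j^{-1/2}$ in \eqref{theomonomer}.

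Next I would perform a Laplace/saddle-point analysis of the remaining integral under the double limit $j,\tau\to\infty$ with $\xi=(j-\tau)/\sqrt{\tau}$ fixed. Inserting the asymptotic expansion $\widetilde{c}_1(u)=(\alpha/(nu))^{1/n}(1+o(1))$ available from \cite{cps} and using the change of variable $u=\tau-(j-n)-w\sqrt{j-n}$, the Gamma-density kernel $e^{-(\tau-u)}(\tau-u)^{j-n}/(j-n)!$ converges to the standard Gaussian weight $e^{-w^2/2}/\sqrt{2\pi}$, while $\widetilde{c}_1^{n-1}(u)$ reduces to a power of $-(\xi+w)\sqrt{\tau}$. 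Substituting $s=-(\xi+w)$ and $s=t^{n}$ converts the integral into precisely the defining expression for $\Phi_{2,n}(\xi)$, and the constants $\alpha,n,\sqrt{2\pi}$ combine to give the prefactor in \eqref{theomonomer}. Two error sources remain: (a) the local central limit error of order $O(j^{-1/2})$ from the Gaussian approximation to the Gamma kernel, and (b) the error from using the asymptotic form of $\widetilde{c}_1$ in the integrand.

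The delicate step, and the only place where the $\log j$ factor appears, is item (b) in the small-$u$ regime: near $u=0$ the function $\widetilde{c}_1$ is bounded while $(\alpha/(nu))^{1/n}$ diverges, so the difference cannot be controlled by a uniform asymptotic bound on $[0,\tau]$. I would split the integration range into a large-$u$ piece, where the pointwise convergence rate of $\widetilde{c}_1$ from \cite{cps} applies directly, and a small-$u$ piece, where the integrand is controlled via the exponential decay of the Gamma kernel away from its peak $u=\tau-(j-n)$. Careful bookkeeping of this small-$u$ contribution yields an integrable $du/u$-type estimate of size $O(j^{-1/2}\log j)$; combining with (a) gives \eqref{theomonomer}. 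Producing the $\log j$ factor (rather than a larger power of $j$) is precisely what this small-$u$ analysis is designed to accomplish, and it is the technical heart of the argument.
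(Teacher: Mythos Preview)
Your setup is right and matches the paper: with $c_{j0}=0$ the sum $\mathfrak{I}_1$ vanishes and everything reduces to the source integral $\mathfrak{I}_2$, which one then analyses asymptotically. Your formal identification of the limit $\Phi_{2,n}$ via the Gaussian approximation of the Gamma kernel followed by the substitutions $s=-(\xi+w)$, $s=t^n$ is also correct.

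The gap is in the error analysis of item~(b), and it is precisely the step that distinguishes Theorem~\ref{teo:monomer} from the $\mu>1$ case of Theorem~\ref{teo:main}. Your plan is to split the $u$-range and, for small $u$, invoke ``exponential decay of the Gamma kernel away from its peak''. But the peak of $e^{-(\tau-u)}(\tau-u)^{j-n}/(j-n)!$ sits at $u=\tau-(j-n)=n-\xi\sqrt{\tau}$, which is within $O(\sqrt{\tau})$ of $u=0$; the kernel at $u=0$ is therefore of the same order $\tau^{-1/2}$ as at the peak, not exponentially smaller. If you then bound small-$u$ and large-$u$ contributions to (b) separately in absolute value, the small-$u$ piece alone already gives, after multiplying by the prefactor $\tau^{(n-1)/(2n)}$, an error of order $\tau^{-1/(2n)}$. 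That is exactly the rate in \eqref{maintheo} for $\mu>1$, not the sharper $j^{-1/2}\log j$ of \eqref{theomonomer}.

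What the paper exploits, and what your outline is missing, is a \emph{cancellation} rather than a bound. Writing $\widetilde{c}_1^{\,n-1}(u)=(\alpha/(nu))^{(n-1)/n}(1+f_n(u))$ and using the bulk ODE $y'(\tau)=\widetilde{c}_1^{\,n-1}(\tau)$ one gets the exact identity
\[
\int_0^{\tau}\frac{f_n(x)}{x^{1-1/n}}\,dx
=\Bigl(\tfrac{n}{\alpha}\Bigr)^{(n-1)/n}\!\bigl(y(\tau)-\nu_0\bigr)-n\tau^{1/n},
\]
and the leading asymptotic of $y(\tau)$ kills the $n\tau^{1/n}$ term. When $\nu_0=\sum_{k\ge n}c_k(0)=0$ this integral tends to zero at rate $O(\tau^{-1+1/n}\log\tau)$; that vanishing, not any kernel decay, is what upgrades the (b)-error from $O(\tau^{-1/(2n)})$ to $O(\tau^{-1/2}\log\tau)$. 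The logarithm itself comes from the second-order expansion $f_n(u)\sim (n-1)(1-\tfrac1n)\tfrac{\log u}{u}$ of Theorem~\ref{teo:longtime}, a large-$u$ effect, not from a $du/u$ singularity near $u=0$.
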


\medskip


\section{Preliminaries}\label{sec:prelim}

We briefly present our approach to the study of \eqref{system}, which 
follows the one used in \cite{cps,crw,costin} and consists of 
the exploration of the following two observations:

\begin{enumerate}
\item First note that the equation for $c_1$ depends only on $c_1$ and on the ``bulk'' quantity $y(t):=\sum_{j=n}^{\infty}c_j(t)$, which 
(formally) satisfies the differential equation $\dot{y}=c_1^n$. Thus, the definition of this bulk variable allow
us to decouple the resulting infinite dimensional system into a closed two-dimensional system for the 
monomer--bulk variables $(c_1, y)$,
from which we get all the needed information about the behaviour of $c_1$.
\item Secondly, the remaining equations for \(c_j\), with \(j\geq n\), depend only on those same variables $c_j$, and on $c_1$.
However, the way they depend on $c_1$ is such that, by the change of time variable \eqref{newtime}, 
the system is transformed
into a linear lower triangular infinite system of ordinary differential equations
for  $\widetilde{c}_j(\tau)=c_j(t(\tau))$ , which can be recursively solved,
in terms of \(\widetilde{c}_1\), using the variation
of constants formula.
\end{enumerate}

\medskip

In fact,
writing the second and third equations of (\ref{system}) in the form
\begin{equation}
  \left\{
  \begin{array}{l}
  {\dot{c}_{n}}=c_1({c_1}^{n-1}-c_n)\\
  \dot{c}_{j}=c_1(c_{j-1}-c_j), \;\;\; j \geq n+1,
  \end{array}
  \right.\label{eq:bixc}
\end{equation}
and introducing the new time scale \eqref{newtime}
along with scaled variables \eqref{solnewtime} system  (\ref{eq:bixc}) reads 
\begin{equation}
  \left\{
  \begin{array}{l}
   {\widetilde{c}_n}{'}=\widetilde{c_1}^{n-1}-\widetilde{c}_n\\
 {\widetilde{c}_j}{'}=\widetilde{c}_{j-1}-\widetilde{c}_j,\;\;\; j \geq n+1,
  \end{array}
  \right.\label{eq:bixctil}
 \end{equation}
where \((\cdot)'=d/d\tau\).
The equation for \( {\widetilde{c}_n}\) can  be readily solved in terms of \(\widetilde{c_1}\) resulting in
\[
{\widetilde{c}_n}(\T) = e^{-\T}{\widetilde{c}_n}(0)+\int_0^\T (\widetilde{c_1}(\T-s))^{n-1}e^{-s}ds.
\]
We can then use this solution to solve recursively for \(j\geq n+1\). 
The following expression for \(\widetilde{c}_j(\T)\) is thus obtained :
\beq
\widetilde{c}_j(\tau) = \mathfrak{I}_1(j,\tau) + \mathfrak{I}_2(j,\tau), \label{solc}
\eeq
where
\beq
\mathfrak{I}_1(j,\tau) := 
e^{-\tau}\sum_{k=n}^{j}\frac{\tau^{j-k}}{(j-k)!}c_k(0),\label{I1}
\eeq
and
\beq
\mathfrak{I}_2(j,\tau) := 
\frac{1}{(n-j)!}\int_{0}^{\tau}(\widetilde{c_1}(\tau-s))^{n-1}s^{j-n}e^{-s}\,ds.\label{I2}
\eeq

\medskip

The information about $\widetilde{c}_1$ needed to study \eqref{I2} is 
extracted from the two-dimensional system for $(c_1,y)$
referred to in  (1) above, which is
\begin{equation}
  \left\{
  \begin{array}{l}
 {\dot{x}}=\alpha -nx^n-xy\\
 {\dot{y}}=x^n,
  \end{array}
  \right.\label{eq:xy}
\end{equation}
where \( x(t):= c_{1}(t)\) and \( y(t):=\sum_{j=n}^{\infty}c_{j}(t)\).
This was done in \cite{cps,costin} using asymptotic and center manifold methods.
The result that we need below is the following:

\begin{theorem}\label{teo:longtime} \cite[Theorem 6]{cps}\mbox{}
 With $\T$ and $\widetilde{c}_1(\T)$ as before, the following holds: 
 \begin{equation}
  \left(\frac{n\tau}{\alpha}\right)^{(n-1)/n}\left(\widetilde{c}_1(\T)\right)^{n-1} = 1 + 
  (n-1)\left(1-\textstyle{\frac{1}{n}}\right)\frac{\log\T}{\T} + o\Bigl(\textstyle{\frac{\log\T}{\T}}\Bigr), 
  \ \mbox{\rm as $\T\to\infty$}. \label{longtime}
 \end{equation}
\end{theorem}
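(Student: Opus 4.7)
The starting observation is that the initial condition $c_{j0} = 0$ for all $j \geq n$ makes the sum defining \eqref{I1} empty, so $\mathfrak{I}_1(j,\tau) \equiv 0$ for all $j \geq n$ and $\tau \geq 0$. By \eqref{solc}, this forces $\widetilde{c}_j(\tau) = \mathfrak{I}_2(j,\tau)$, and the theorem reduces to bounding
\[
\left| \frac{\sqrt{2\pi}}{\alpha}\left(\frac{\alpha}{n}\right)^{1/n}\tau^{(n-1)/(2n)}\mathfrak{I}_2(j,\tau) - \Phi_{2,n}(\xi) \right|.
\]

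The plan is a Laplace-type asymptotic evaluation of \eqref{I2}. After the change of variables $\sigma := \tau - s$, the integral becomes $\frac{1}{(j-n)!}\int_0^\tau (\widetilde{c}_1(\sigma))^{n-1}(\tau-\sigma)^{j-n}e^{-(\tau-\sigma)}\,d\sigma$. I would insert Theorem~\ref{teo:longtime} in the form $(\widetilde{c}_1(\sigma))^{n-1} = (\alpha/(n\sigma))^{(n-1)/n} g(\sigma)$, where $g$ is bounded on $[0,\infty)$ and satisfies $g(\sigma) = 1 + O(\log\sigma/\sigma)$ as $\sigma \to \infty$. The natural second substitution is $\sigma = \sqrt{\tau}\,w^n$, the scale suggested by the integral representation of $\Phi_{2,n}(\xi)$; combined with Stirling's formula for $(j-n)!$, Taylor expansion of $\log(1 - w^n/\sqrt{\tau})$, and the identity $j-n = \tau + \xi\sqrt{\tau} - n$, this yields
\[
\frac{\sqrt{2\pi}}{\alpha}\left(\frac{\alpha}{n}\right)^{1/n}\tau^{(n-1)/(2n)}\mathfrak{I}_2 = e^{-\xi^2/2}\int_0^{\tau^{1/(2n)}} g(\sqrt{\tau}\,w^n)\,e^{-\xi w^n - w^{2n}/2}\,dw + E(\tau,\xi),
\]
with $E(\tau,\xi) = O(\tau^{-1/2})$ collecting the Stirling remainder and the higher-order Taylor terms.

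Subtracting $\Phi_{2,n}(\xi) = e^{-\xi^2/2}\int_0^\infty e^{-\xi w^n - w^{2n}/2}\,dw$, one splits the residual at a threshold $w_*$ chosen with $\sqrt{\tau}\,w_*^n \asymp \log\tau$. The tail $w > \tau^{1/(2n)}$ is exponentially small; on the bulk $[w_*,\tau^{1/(2n)}]$ the estimate $|g(\sqrt{\tau}w^n) - 1| = O(\log(\sqrt{\tau}w^n)/(\sqrt{\tau}w^n))$, integrated against the Gaussian-type weight $e^{-\xi w^n - w^{2n}/2}$, produces the $O(\tau^{-1/2}\log\tau)$ contribution that drives the final rate. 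The delicate part---and the main obstacle---is the boundary layer $[0,w_*]$, where $g$ is outside its asymptotic regime: one must use the short-time behavior of $\widetilde{c}_1(\sigma)$, extracted from the two-dimensional monomer--bulk system \eqref{eq:xy}, to show that the discrepancy between $g(\sqrt{\tau}w^n)$ and $1$ integrated over this narrow interval does not exceed the bulk rate. Assembling the bulk, boundary-layer, tail and $E(\tau,\xi)$ contributions then yields the bound $O(j^{-1/2}\log j)$ asserted in \eqref{theomonomer}.
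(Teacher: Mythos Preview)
Your proposal does not address the stated theorem at all. Theorem~\ref{teo:longtime} is an asymptotic expansion for the monomer quantity $\bigl(\tfrac{n\tau}{\alpha}\bigr)^{(n-1)/n}(\widetilde{c}_1(\tau))^{n-1}$ as $\tau\to\infty$; it is a statement purely about the solution of the two-dimensional monomer--bulk system \eqref{eq:xy}, and in this paper it is not proved but quoted from~\cite{cps}. What you have written instead is a proof sketch for Theorem~\ref{teo:monomer}: you assume $c_{j0}=0$, observe that $\mathfrak{I}_1\equiv 0$, and then carry out a Laplace-type analysis of $\mathfrak{I}_2$ to recover the bound $O(j^{-1/2}\log j)$ in \eqref{theomonomer}. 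Nowhere do you attempt to derive the expansion \eqref{longtime}; on the contrary, you \emph{use} it (``I would insert Theorem~\ref{teo:longtime} in the form $(\widetilde{c}_1(\sigma))^{n-1}=(\alpha/(n\sigma))^{(n-1)/n}g(\sigma)$\ldots'').

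To prove \eqref{longtime} itself one must work with the planar ODE \eqref{eq:xy}: the argument in \cite{cps} proceeds via a change of variables that brings the system into a form with a one-dimensional center manifold, computes the expansion of that manifold, and then translates the resulting large-$t$ asymptotics back through the time change \eqref{newtime}. None of this structure appears in your proposal. If your intent was in fact to prove Theorem~\ref{teo:monomer}, your outline is broadly aligned with the paper's approach in Section~\ref{sec:rateest} (Lemmas~\ref{P}--\ref{J4} plus Theorem~\ref{final} specialized to $\nu_0=0$), but as a proof of Theorem~\ref{teo:longtime} it is simply off target.
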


\medskip

In the following two sections we prove Theorem~\ref{teo:main} by studying separately the
contributions of the two terms \eqref{I1} and \eqref{I2} to the limit \eqref{solc}.
As a consequence of the estimates in section~\ref{sec:rateest} a proof of Theorem~\ref{teo:monomer}
will also be obtained.


\section{Rates of convergence to $\Phi_{2,n}$: contribution of $\mathfrak{I}_1$}\label{sec:rateestinit}

In this section we study the limit
\beq
\lim_{\substack{j,\,\tau \rightarrow +\infty\\ \xi=\frac{j-\tau}{\sqrt{\tau}}\; {\rm fixed}}}
\frac{\sqrt{2\pi}}{\alpha}\left(\frac{\alpha}{n}\right)^{\frac{1}{n}}\tau^{\frac{n-1}{2n}}
\mathfrak{I}_1(j,\tau).\label{criticallimit}
\eeq
Part of the analysis was presented in \cite{crw} for the case $n=2$ but the study of $S_4$
below is new and proves the conjecture left open in section 6.2 of \cite{crw}.
Using the definition of $\xi$ we can write $j=\tau+\xi\sqrt{\tau},$ which, solved for $\tau$ gives
$\tau = j\Delta_j$ where the function $\Delta_j:\Rb\longrightarrow\Rb$ is defined by
\beq
\Delta_j(\xi):=\left(\sqrt{1+\frac{\xi^2}{4j}}\;+\sgn(-\xi)\sqrt{\frac{\xi^2}{4j}}\;\right)^2. \label{deltaj}
\eeq
A number of properties of this function can be easily established, in particular,
\begin{equation}
\Delta_j  \,
\begin{cases}
<1 & \text{\;\;if $\xi>0$}\\ =1 & \text{\;\;if $\xi=0$} \\ >1 & \text{\;\;if $\xi<0$,}
\end{cases} \label{signdelta}
\end{equation}
and 
\begin{equation}
 \Delta_j   \rightarrow 1 \text{\;\;as $\jri$, for all $\xi$.} \label{limitdelta}
\end{equation}
Assuming $c_k(0)\leq \rho/k^{\mu}$ in \eqref{I1}, changing the summation variable to $\ell:=j-k$, writing
$\tau$ in terms of $j$ as indicated, and not taking multiplication constants
into account, the limit (\ref{criticallimit}) can be written as
\beq
\lim_{\jri}
(j \Delta_j)^{\frac{n-1}{2n}}\!e^{-j\Delta_j}\sum_{\ell=0}^{j-n}\frac{(j\Delta_j)^{\ell}}{\ell !(j-\ell)^{\mu}}.\label{jlimit}
\eeq

\medskip

In order to
evaluate (\ref{jlimit}) we decompose the sum into a ``small $\ell$''
and a ``large $\ell$'' contribution. 
It is natural to consider a cut-off size separating the two sums that scales like 
$\tau$ as a function of $j$, which means that the cut-off value should behave as $\sim j-|\xi|\sqrt{j}.$
So let us define 
\begin{equation}\label{eq:jstar}
	j^{\star}:=(j-n)-(1+|\xi|)\sqrt{j-n} ,
\end{equation}
 and write the expression in (\ref{jlimit})
as
\beq
(j \Delta_j)^{\frac{n-1}{2n}}\!e^{-j\Delta_j}\left(\sum_{0\leq\ell\leq j^{\star}}\frac{(j\Delta_j)^{\ell}}{\ell !(j-\ell)^{\mu}}
+\sum_{j^{\star}<\ell\leq j-n}\frac{(j\Delta_j)^{\ell}}{\ell !(j-\ell)^{\mu}}\right) =: S_3(j)+S_4(j)    \nn
\eeq

\medskip

The ``small $\ell$'' sum, $S_3$, that corresponds to the contribution of {\it large\/} cluster
in the initial data (remember the change of variable $k\mapsto\ell$), can be estimated in the same way as 
 the sum $S_1$ in the proof of Theorem~5.1 of \cite{crw}, and was already done in 
section~6.2 of \cite{crw} with $n=2$. The general $n$ case is no different:
\beqa
S_3(j) & = & (j \Delta_j)^{\frac{n-1}{2n}}\;e^{-j\Delta_j}\sum_{0\leq\ell\leq j^{\star}}\frac{(j\Delta_j)^{\ell}}{\ell!(j-j^{\star})^{\mu}} \nn \\
& = & j^{\frac{n-1}{2n}-\frac{\mu}{2}}\left(\frac{n}{\sqrt{j}}+(1+|\xi|)\sqrt{1-\frac{n}{j}}\right)^{-\mu}
\!\!e^{-j\Delta_j}\sum_{0\leq\ell\leq j^{\star}}\frac{(j\Delta_j)^{\ell}}{\ell!}\nn \\
& \leq & C j^{\frac{n-1}{2n}-\frac{\mu}{2}}e^{-j\Delta_j}\sum_{\ell=0}^{\infty}\frac{(j\Delta_j)^{\ell}}{\ell!} \;=\;
C j^{\frac{n-1}{2n}-\frac{\mu}{2}}.\label{s3}
\eeqa
The existence of a positive constant $C$ is due to the fact that
\[
 \left(\frac{n}{\sqrt{j}}+(1+|\xi|)\sqrt{1-\frac{n}{j}}\right)^{-\mu} \longrightarrow (1+|\xi|)^{-\mu} \in\Rb^+,\;\text{as $j\to\infty$.}
\]

\medskip

We now consider the ``large $\ell$'' sum, $S_4$, corresponding to the contribution of {\it small\/} clusters.
Recalling Stirling's expansion $\ell ! = \sqrt{2\pi}\ell^{\ell + 
\frac{1}{2}}e^{-\ell}\bigl(1+\frac{1}{12\ell} + O(\ell^{-2})\bigr),$
we can write
\begin{eqnarray}
 S_4(j) & = & (j \Delta_j)^{\frac{n-1}{2n}}\!e^{-j\Delta_j}\sum_{\ell =j^{\star}+1}^{j-n}\frac{(j\Delta_j)^{\ell}}{\ell !(j-\ell)^{\mu}}\nonumber \\
        & = & (j \Delta_j)^{\frac{n-1}{2n}}\!e^{-j\Delta_j}\frac{1}{\sqrt{2\pi}}
               \sum_{\ell =j^{\star}+1}^{j-n}\frac{(j\Delta_j)^{\ell}}{\ell^{\ell + \frac{1}{2}}e^{-\ell}\bigl(1+ O(\ell^{-1})\bigr)(j-\ell)^{\mu}}\nonumber \\
        & = & \frac{1}{\sqrt{2\pi}}\bigl(1+ O(j^{-1})\bigr)(j\Delta_j)^{\frac{n-1}{2n}}\!\sum_{\ell =j^{\star}+1}^{j-n}
               \Bigl(\frac{j\Delta_j}{\ell}e^{1-\frac{j\Delta_j}{\ell}}\Bigr)^{\ell}\frac{1}{\ell^{1/2}(j-\ell)^\mu}.\label{est1}
\end{eqnarray}
Now, observing that the function $x\mapsto xe^{1-x},$ for positive $x,$ is positive and has an absolute maximum at $x=1$ with value equal to $1$; 
recalling \eqref{limitdelta}; and using the expression of $j^\star$ in \eqref{eq:jstar} to conclude that, when $j^\star +1 \leq \ell \leq j-n$, we
have $C_1j^{-1/2} \leq \ell^{-1/2}\leq C_2j^{-1/2}$, with $C_1, C_2\to 1$ as $j\to\infty,$ we can estimate \eqref{est1} by
\begin{eqnarray}
 S_4(j) & \leq & \frac{1}{\sqrt{2\pi}}\bigl(1+ O(j^{-1})\bigr)j^{\frac{n-1}{2n}-\frac{1}{2}}\sum_{\ell =j^{\star}+1}^{j-n}\frac{1}{(j-\ell)^\mu}.\label{est2}
\end{eqnarray}
To estimate the sum in the right-hand side observe that
\[
 \sum_{\ell =j^{\star}+1}^{j-n}\frac{1}{(j-\ell)^\mu} = \sum_{k=n}^{j-j^{\star}-1}\frac{1}{k^\mu},
\]
and the last sum is the lower Darboux sum of the integral of $x\mapsto x^{-\mu}$ between $n-1$ and $j-j^\star-1$. Thus, denoting by $C$ constants independent of $j$ that can vary from case to case, we have: 
when $\mu\in (0,1),$
\begin{eqnarray*}
 \sum_{k=n}^{j-j^{\star}-1}\frac{1}{k^\mu} & \leq & \int_{n-1}^{j-j^\star-1}\frac{1}{x^{\mu}}dx \\
 & = & \frac{1}{1-\mu}\Bigl((j-j^\star-1)^{1-\mu} - (n-1)^{1-\mu}\Bigr)\\
 & = & \frac{1}{1-\mu}\Bigl(\bigl((n-1)+(1+|\xi|)\sqrt{j-n}\bigr)^{1-\mu} - (n-1)^{1-\mu}\Bigr)\\
 & = & \frac{(1+|\xi|)^{1-\mu}}{1-\mu}\Bigl(1+ o\bigl(j^{-\frac{1-\mu}{2}}\bigr)\Bigr)\left(1-\frac{n}{j}\right)^{\frac{1-\mu}{2}}j^{\frac{1-\mu}{2}}\\
 & \leq & Cj^{\frac{1-\mu}{2}},
\end{eqnarray*}
when $\mu=1$,
\[
\sum_{k=n}^{j-j^{\star}-1}\frac{1}{k} \leq \int_{n-1}^{j-j^\star-1}\frac{1}{x}dx = \log\frac{j-j^\star-1}{n-1} 
\leq C\log j,
\]
and when $\mu>1$,
\[
\sum_{k=n}^{j-j^{\star}-1}\frac{1}{k^\mu} \leq \int_{n-1}^{j-j^\star-1}\frac{1}{x^{\mu}}dx
=  \frac{1}{\mu-1}\Bigl((n-1)^{1-\mu}-(j-j^\star-1)^{1-\mu}\Bigr)
\leq C.
\]
Hence \eqref{est2} can be estimated as
\begin{eqnarray*}
 \eqref{est2} \leq \begin{cases}
Cj^{-\frac{1}{2n}+\frac{1-\mu}{2}},&\quad\text{if $\mu \in (0,1)$}\\
Cj^{-\frac{1}{2n}}\log j,&\quad\text{if $\mu =1$}\\
Cj^{-\frac{1}{2n}},&\quad\text{if $\mu >1$}.
\end{cases}
\end{eqnarray*}

\medskip

Together with the estimate \eqref{s3} we conclude that, for $\mu > 1-\frac{1}{n}$,
the limit  \eqref{criticallimit} is equal to zero and the rate of convergence to the limit is
the same as for $S_4$ above, namely, 
\begin{eqnarray}
\frac{\sqrt{2\pi}}{\alpha}\left(\frac{\alpha}{n}\right)^{\frac{1}{n}}\tau^{\frac{n-1}{2n}}\mathfrak{I}_1(j,\tau) 
 \leq \begin{cases}
Cj^{-\frac{1}{2n}+\frac{1-\mu}{2}},&\quad\text{if $\mu \in \bigl(1-\frac{1}{n},1\bigr)$}\\
Cj^{-\frac{1}{2n}}\log j,&\quad\text{if $\mu =1$}\\
Cj^{-\frac{1}{2n}},&\quad\text{if $\mu >1$}.
\end{cases}                                 \label{finalsec3}
\end{eqnarray}


\section{Rate of convergence to $\Phi_{2,n}$: contribution of $\mathfrak{I}_2$}\label{sec:rateest}

We now consider the limit
\beq
\lim_{\substack{j,\,\tau \rightarrow +\infty\\ \xi=\frac{j-\tau}{\sqrt{\tau}}\; {\rm fixed}}}
\frac{\sqrt{2\pi}}{\alpha}\left(\frac{\alpha}{n}\right)^{\frac{1}{n}}\tau^{\frac{n-1}{2n}}\mathfrak{I}_2(j,\tau)\label{criticallimit2}
\eeq
with $\mathfrak{I}_2(j,\tau)$ defined by \eqref{I2}. Analogously to what was done in \cite{cps,crw}, instead of
using \eqref{I2} with $j\in \Nb$
we consider the function  $\mathfrak{I}_2(x,\tau)$ defined for $(x, \t) \in [n, \infty)\times [0, \infty)$ by
the same expression as \eqref{I2} and thus will consider instead the limit of
\begin{equation}
\varphi(x, \tau) = \frac{\frac{\sqrt{2\pi}}{\alpha}\left(\frac{\alpha}{n}\right)^{1/n}\t^{\frac{n-1}{2n}}}{\Gamma(n-x+1)}
\int_0^{\t}(\widetilde{c_1}(\tau-s))^{n-1}s^{x-n}e^{-s}\,ds \label{fixtau}
\end{equation}
when $x, \t \to +\infty$ with $\xi=\frac{x-\tau}{\sqrt{\tau}}$ fixed. Hence, substituting $x=\tau+\xi\sqrt{\tau}$
in \eqref{fixtau}, the problem is reduced to the study of the limit of
$\varphi(\tau+\xi\sqrt{\tau}, \tau)$ as $\t\to +\infty.$

\medskip

It is not difficult to see that a change of the integration variable and a few 
algebraic manipulations allow us to write \eqref{fixtau} in the following form

\begin{eqnarray}
\lefteqn{\varphi(\tau+\xi\sqrt{\tau}, \tau) =} \label{e5} \\
&=& P(\xi,\tau)
\int_0^{\tau^{1/2n}}\bigl(1 + f_n(w^n\sqrt{\tau})\bigr)\bigl(1+g(w,\tau,\xi)\bigr)
e^{-\xi w^n-\frac{w^{2n}}{2}}dw\nonumber \\
&=&P(\xi,\tau)\sum_{j=1}^4 J_j(\xi, \tau)\;,\nonumber 
\end{eqnarray}
where,
$$
P(\xi,\tau):=\frac{\sqrt{2\pi}e^{-\tau}\tau^{\tau+\xi\sqrt{\tau}-n+\frac{1}{2}}}
{\Gamma(\tau+\xi\sqrt{\tau}-n+1)}\,,
$$
\begin{equation}\label{fndef}
f_n(\tau):=-1+\left(\frac{n\tau}{\alpha}\right)^{(n-1)/n}\!\!(\tilde{c}_1(\tau))^{n-1}\,,
\end{equation}
$$
g(w,\tau,\xi):=-1+\left( 1-\frac{w^n}{\sqrt{\tau}}\right)^{\tau+\xi\sqrt{\tau}-n}\!\!\exp
\left(w^n\sqrt{\tau}+\xi w^n+\frac{w^{2n}}{2}\right)\,,
$$
and the $J_j$ are the contributions corresponding 
to taking the following in each of the bracketed terms in the integrand of \eqref{e5}:
\begin{center}
\vspace*{3mm}
\begin{tabular}{@{}|c|cc|}\hline
Term  & Contribution & Contribution \\
      & from $(1+f_n)$ & from $(1+g)$   \\ \hline
$J_1$ & $1$   & $1$  \\    
$J_2$ & $f_n$ & $1$  \\ 
$J_3$ & $1$   & $g$  \\  
$J_4$ & $f_n$ & $g$  \\  \hline 
\end{tabular}
\vspace*{3mm}
\end{center}

\medskip

We shall now estimate the behaviour as $\t\to +\infty$ of each of these terms $P$ and $J_j$, paying
attention not only to the limit but actually to the rate at which that limit is approached.

\medskip

We start by estimating the prefactor $P$:
%
%
\begin{lemma}\label{P}
$P(\xi,\tau)=e^{-\frac{\xi^2}{2}}(1+O(\tau^{-1/2}))$, as $\tau\to +\infty$.
\end{lemma}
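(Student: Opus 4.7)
The plan is to prove Lemma~\ref{P} by a direct application of Stirling's formula to the Gamma function in the denominator of $P(\xi,\tau)$, followed by a careful Taylor expansion of the resulting logarithm. Everything else is bookkeeping, but the bookkeeping is delicate because the exponent $z := \tau + \xi\sqrt{\tau} - n$ contains a $\sqrt{\tau}$ piece that has to cancel exactly against a large factor $e^{\xi\sqrt{\tau}}$ appearing on the other side.

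First, I would set $z := \tau + \xi\sqrt{\tau} - n$ and invoke Stirling in the form
\[
\Gamma(z+1) = \sqrt{2\pi}\, z^{z+1/2} e^{-z}\bigl(1 + O(z^{-1})\bigr), \qquad z \to +\infty,
\]
which is valid since for fixed $\xi\in\Rb$ we have $z\to +\infty$ as $\t\to +\infty$, and hence $O(z^{-1}) = O(\tau^{-1})$. Substituting into the definition of $P$, the two $\sqrt{2\pi}$ factors cancel and one obtains
\[
P(\xi,\tau) = e^{\xi\sqrt{\tau}-n}\left(\frac{\tau}{z}\right)^{z+1/2}\!\bigl(1+O(\tau^{-1})\bigr).
\]

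Next, I would analyse $(\tau/z)^{z+1/2}$ by writing $\tau/z = (1+u)^{-1}$ with
$u := \xi\tau^{-1/2} - n\tau^{-1}$,
so that
\[
\log\!\left(\frac{\tau}{z}\right)^{z+1/2} = -(z+\tfrac{1}{2})\log(1+u).
\]
Expanding $\log(1+u) = u - \tfrac{1}{2}u^2 + \tfrac{1}{3}u^3 + O(u^4)$ and multiplying by $-(z+\tfrac{1}{2}) = -(\tau + \xi\sqrt{\tau} - n + \tfrac{1}{2})$, I would collect terms according to their order in $\tau^{-1/2}$. The leading term $-\tau\cdot u$ produces $-\xi\sqrt{\tau}+n$ (up to $O(\tau^{-1/2})$); the cross term from $-\xi\sqrt{\tau}\cdot u$ together with the quadratic contribution $\tfrac{1}{2}\tau u^2$ produces exactly $-\xi^2/2$; and all remaining contributions are $O(\tau^{-1/2})$. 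This yields
\[
\left(\frac{\tau}{z}\right)^{z+1/2} = \exp\!\left(-\xi\sqrt{\tau}+n-\frac{\xi^2}{2}+O(\tau^{-1/2})\right).
\]

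Finally, inserting this expansion back, the large terms $\pm\xi\sqrt{\tau}$ and $\mp n$ cancel exactly, leaving
\[
P(\xi,\tau) = e^{-\xi^2/2}\,\exp\!\bigl(O(\tau^{-1/2})\bigr)\bigl(1+O(\tau^{-1})\bigr) = e^{-\xi^2/2}\bigl(1+O(\tau^{-1/2})\bigr),
\]
which is the claim. The main obstacle is simply the algebraic care required in the Taylor expansion: because $z$ grows like $\tau$, many terms contribute to orders $\t^{1/2}$, $\t^0$ and $\t^{-1/2}$ simultaneously, and one must retain the expansion of $\log(1+u)$ at least to third order (to capture the $\xi^3/(2\sqrt{\tau})$ remainder correctly) before truncating. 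Once this truncation is done properly, the exact cancellation of the $\sqrt{\tau}$-order terms against the prefactor $e^{\xi\sqrt{\tau}-n}$ produces the Gaussian $e^{-\xi^2/2}$ with the claimed error rate $O(\tau^{-1/2})$.
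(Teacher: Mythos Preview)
Your proof is correct and follows essentially the same approach as the paper: apply Stirling's formula to $\Gamma(\tau+\xi\sqrt{\tau}-n+1)$, then Taylor expand the resulting logarithm so that the $\xi\sqrt{\tau}$-order terms cancel and only $-\xi^2/2$ survives. The only cosmetic difference is that the paper sets $y:=\tau+\xi\sqrt{\tau}$ (absorbing the extra $-n+\tfrac{1}{2}$ in the exponent into the $O(\tau^{-1/2})$ error via $(\tau/y)^{-n+1/2}=1+O(\tau^{-1/2})$) and expands $\log\bigl(1-\xi\sqrt{\tau}/y\bigr)$, whereas you set $z:=y-n$ and expand $\log(1+u)$ with $u=\xi\tau^{-1/2}-n\tau^{-1}$; both parameterizations lead to the same cancellation and the same $O(\tau^{-1/2})$ remainder.
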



\begin{proof}
Define $y=\tau+\xi\sqrt{\tau}$, so that,
$$
P=\frac{\sqrt{2\pi}e^{-\tau}\tau^{y-n+\frac{1}{2}}}{\Gamma(y-n+1)}\,.
$$
Since by Stirling expansion we have, as $y\to+\infty$,
\begin{align*}
\frac{1}{\Gamma(y-n+1)}&=\frac{1}{\Gamma(y)}(y-1)(y-2)\dots (y-(n-1))\\
&=\frac{y^{n-1}}{\sqrt{2\pi}e^{-y}y^{y-\frac{1}{2}}}
\frac{1+O(y^{-1})}{1+O(y^{-1})}\\
&=\frac{1}{\sqrt{2\pi}e^{-y}y^{y-n+\frac{1}{2}}}(1+O(y^{-1})),
\end{align*}
we obtain, as $\tau\to+\infty$,
\begin{align}
\nonumber
P&=e^{y-\tau}\left(\frac{\tau}{y}\right)^{y-n+\frac{1}{2}}(1+O(y^{-1}))\\
\nonumber
&=e^{\xi\sqrt{\tau}}\left(\frac{\tau}{\tau+\xi\sqrt{\tau}}\right)^{\tau+\xi\sqrt{\tau}-n+\frac{1}{2}}(1+O(\tau^{-1/2}))\\
&=e^{\xi\sqrt{\tau}}\left(\frac{\tau}{\tau+\xi\sqrt{\tau}}\right)^{\tau+\xi\sqrt{\tau}}(1+O(\tau^{-1/2}))\,.
\label{Peq1}
\end{align}
 By further observing that, 
\begin{eqnarray*}
\lefteqn{e^{\xi\sqrt{\tau}}\left(\frac{\tau}{\tau+\xi\sqrt{\tau}}\right)^{\tau+\xi\sqrt{\tau}} =}\\
&=&\exp \left[\xi\sqrt{\tau}+(\tau+\xi\sqrt{\tau})\log\left(1+\frac{-\xi\sqrt{\tau}}{\tau+\xi\sqrt{\tau}}\right)\right]\\
&=&\exp \left[\xi\sqrt{\tau}+(\tau+\xi\sqrt{\tau})\left(\frac{-\xi\sqrt{\tau}}{\tau+\xi\sqrt{\tau}}
-\frac{1}{2}\left(\frac{-\xi\sqrt{\tau}}{\tau+\xi\sqrt{\tau}}\right)^2+O(\tau^{-3/2})\right)\right]\\
&=&\exp \left[-\frac{1}{2}\xi^2\frac{\tau}{\tau+\xi\sqrt{\tau}}
+O(\tau^{-1/2})\right]\\
&=&\exp \left[-\frac{1}{2}\xi^2\left(1+O(\tau^{-1/2}\right)\right]\\
&=&e^{-\frac{\xi^2}{2}}(1+O(\tau^{-1/2}))\,,
\end{eqnarray*}
as $\tau\to+\infty$, and using this in \eqref{Peq1} the lemma is proved.
\end{proof}
%
%
\begin{lemma}\label{J1}
$J_1=\displaystyle\int_0^{+\infty}e^{-\xi w^n-\frac{w^{2n}}{2}}dw+O(e^{-\gamma \tau})$, as $\tau\to +\infty$, where $\gamma>0$ only depends on $n$ and $\xi$.
\end{lemma}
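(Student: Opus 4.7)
Since $J_1(\xi,\tau)=\int_0^{\tau^{1/2n}} e^{-\xi w^n - w^{2n}/2}\,dw$, the strategy is to extend the integration to $[0,+\infty)$ and show that the discarded tail is exponentially small in $\tau$. Concretely, I will write
\[
J_1(\xi,\tau)\;=\;\int_0^{+\infty} e^{-\xi w^n - w^{2n}/2}\,dw\;-\;\int_{\tau^{1/2n}}^{+\infty} e^{-\xi w^n - w^{2n}/2}\,dw,
\]
so that proving the lemma amounts to bounding the tail by $O(e^{-\gamma\tau})$. The improper integral in the first term converges for every $\xi\in\Rb$, since at infinity the $-w^{2n}/2$ contribution dominates the perturbation $-\xi w^n$; this term furnishes the advertised limit value.

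The main step is the tail estimate. The plan is to pick a threshold $w_0=w_0(\xi,n)$ such that $|\xi|w^n\leq \tfrac{1}{8}w^{2n}$ whenever $w\geq w_0$; for instance one may take $w_0:=(8|\xi|)^{1/n}$ if $\xi\neq 0$, and $w_0:=0$ if $\xi\geq 0$. Once $\tau$ is large enough that $\tau^{1/2n}\geq w_0$, every $w$ in the range of integration satisfies
\[
-\xi w^n-\tfrac{1}{2}w^{2n}\;\leq\;|\xi|w^n-\tfrac{1}{2}w^{2n}\;\leq\;-\tfrac{3}{8}w^{2n}.
\]
I will then split $\tfrac{3}{8}w^{2n}=\tfrac{1}{4}w^{2n}+\tfrac{1}{8}w^{2n}$ and use $w^{2n}\geq \tau$ in the first piece to obtain
\[
\int_{\tau^{1/2n}}^{+\infty}e^{-\xi w^n-w^{2n}/2}\,dw\;\leq\;e^{-\tau/4}\int_{0}^{+\infty}e^{-w^{2n}/8}\,dw,
\]
the last integral being a finite constant depending only on $n$. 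This yields the desired $O(e^{-\gamma\tau})$ bound with $\gamma=\tfrac14$, valid for $\tau$ sufficiently large.

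The only delicate point is the case $\xi<0$, where $-\xi w^n$ contributes positively and must be absorbed into $-\tfrac12 w^{2n}$. This is precisely what forces the threshold $w_0$, and thus the implicit constant in $O(e^{-\gamma\tau})$, to depend on $\xi$, in agreement with the statement of the lemma. No other obstacles are anticipated: the argument is a routine Laplace-type tail estimate for a Gaussian-like integrand.
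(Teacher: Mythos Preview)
Your proposal is correct and follows essentially the same approach as the paper: both write the error as the tail $\int_{\tau^{1/2n}}^{+\infty}e^{-\xi w^n-w^{2n}/2}\,dw$ and then absorb the linear term $\xi w^n$ into the dominant quadratic term $w^{2n}/2$ to obtain exponential decay in $\tau$. The only cosmetic difference is that the paper splits into the cases $\xi\geq 0$ and $\xi<0$ (getting $\gamma=\tfrac12$ in the first case and a $\xi$-dependent $\gamma$ in the second), whereas your threshold argument handles both signs uniformly and yields the fixed value $\gamma=\tfrac14$ with the $\xi$-dependence pushed into the range of validity in $\tau$.
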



\begin{proof}
It is obvious that, as $\tau\to +\infty,$
\[
 J_1(\xi,\tau) = \int_0^{\tau^{1/2n}}e^{-\xi w^n-\frac{w^{2n}}{2}}dw \longrightarrow\int_0^{+\infty}e^{-\xi w^n-\frac{w^{2n}}{2}}dw.
\]
The issue here is to estimate at what rate the limit is approached. Since the integrand function is positive
we have 
\beq
 \left|\int_0^{\tau^{1/2n}}e^{-\xi w^n-\frac{w^{2n}}{2}}dw - 
 \int_0^{+\infty}e^{-\xi w^n-\frac{w^{2n}}{2}}dw\right| 
 = \int_{\tau^{1/2n}}^{+\infty}e^{-\xi w^n-\frac{w^{2n}}{2}}dw.\label{J1estimate}
\eeq
Start by considering $\xi\geq 0:$ 
\begin{eqnarray}
 \int_{\tau^{1/2n}}^{+\infty}e^{-\xi w^n-\frac{w^{2n}}{2}}dw 
 & < & \int_{\tau^{1/2n}}^{+\infty}e^{-\frac{w^{2n}}{2}}dw \; = \; 
       \tfrac{1}{2n}\int_{\tau}^{+\infty}z^{\frac{1}{2n}-1}e^{-z/2}dz \nonumber \\
 & < & \tfrac{1}{2n}\int_{\tau}^{+\infty}e^{-z/2}dz \;= \; \tfrac{1}{n}e^{-\tau/2}.\label{xi>0}
\end{eqnarray}
Now consider the case $\xi <0:$
\begin{eqnarray}
 \int_{\tau^{1/2n}}^{+\infty}e^{-\xi w^n-\frac{w^{2n}}{2}}dw 
 & = & \int_{\tau^{1/2n}}^{+\infty}e^{|\xi| w^n-\frac{w^{2n}}{2}}dw \; = \;
       \int_{\tau^{1/2n}}^{+\infty}e^{-\frac{w^{2n}}{2}\left(1-\frac{2|\xi|}{w^n}\right)}dw\nonumber \\
 & < & \frac{1}{2n\gamma}e^{-\gamma(\xi)\tau},      \label{xi<0}
\end{eqnarray}
where the last inequality is due to
\[
 1-\frac{2|\xi|}{w^n} > 1 - \frac{2|\xi|}{\tau^{1/2}} > 1 - \frac{2|\xi|}{\tau_0^{1/2}} =: 2\gamma(\xi),
\]
valid for all $\tau \in [\tau_0, +\infty)$, for every fixed $\tau_0>0.$ Observing that $2\gamma(\xi) <1$ we
can  use \eqref{xi>0} and \eqref{xi<0} in \eqref{J1estimate} to finally obtain
\beq
\left|\int_0^{\tau^{1/2n}}e^{-\xi w^n-\frac{w^{2n}}{2}}dw - 
 \int_0^{+\infty}e^{-\xi w^n-\frac{w^{2n}}{2}}dw\right| < \frac{1}{2n\gamma}e^{-\gamma(\xi)\tau}. \label{J1conv}
\eeq
(Observe that, due to \eqref{xi>0}, the convergence is uniform for $\xi\geq 0$.)
\end{proof}

%
%
\begin{lemma}\label{J3}
$J_3=O(\tau^{-1/2}),$ as $\tau\to +\infty.$
\end{lemma}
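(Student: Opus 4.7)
The plan is to split $J_3(\xi,\tau)$ at the intermediate cut-off $w^\ast:=\tau^{1/(6n)}$, chosen so that $u:=w^n/\sqrt{\tau}=O(\tau^{-1/3})$ uniformly on $[0,w^\ast]$ while $w^\ast\to+\infty$. On $[0,w^\ast]$ the desired $\tau^{-1/2}$ rate will come from a Taylor expansion of $g$; on $[w^\ast,\tau^{1/(2n)}]$ both the integrand $(1+g)e^{-\xi w^n-w^{2n}/2}$ and the reference Gaussian $e^{-\xi w^n-w^{2n}/2}$ will be exponentially small in a positive power of $\tau$.

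On $[0,w^\ast]$, write $h:=\log(1+g)=(\tau+\xi\sqrt{\tau}-n)\log(1-u)+u\tau+\xi u\sqrt{\tau}+u^2\tau/2$. Expanding $\log(1-u)=-\sum_{k\geq 1}u^k/k$, the three terms $u\tau$, $\xi u\sqrt{\tau}$ and $u^2\tau/2$ cancel the corresponding leading pieces of the series, leaving
$$h=\tau R_3(u)+\xi\sqrt{\tau}\,R_2(u)-n\log(1-u),\qquad R_k(u):=-\sum_{j\geq k}u^j/j.$$
The elementary bounds $|R_3(u)|\leq \tfrac{2}{3}u^3$, $|R_2(u)|\leq u^2$ and $|\log(1-u)|\leq 2u$ (valid for $u\leq\tfrac{1}{2}$), combined with $u=w^n/\sqrt{\tau}$, give
$$|h(w,\tau,\xi)|\leq\frac{2w^{3n}}{3\sqrt{\tau}}+\frac{|\xi|\,w^{2n}}{\sqrt{\tau}}+\frac{2nw^n}{\sqrt{\tau}}\leq 1$$
for $w\leq w^\ast$ and $\tau$ sufficiently large (depending on $\xi$), whence $|g|=|e^h-1|\leq 3|h|$ uniformly. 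Since the polynomial $w^n+w^{2n}+w^{3n}$ is integrable against $e^{-\xi w^n-w^{2n}/2}$ on $[0,+\infty)$ for every fixed $\xi\in\Rb$, this delivers $\int_0^{w^\ast}|g|\,e^{-\xi w^n-w^{2n}/2}\,dw=O(\tau^{-1/2})$.

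On $[w^\ast,\tau^{1/(2n)}]$, using $1+g\geq 0$ one has $\bigl|g\,e^{-\xi w^n-w^{2n}/2}\bigr|\leq (1-u)^{\tau+\xi\sqrt{\tau}-n}e^{u\tau}+e^{-\xi w^n-w^{2n}/2}$. The second summand integrates to $O(e^{-c\tau^{1/3}})$ by the same tail-of-Gaussian argument as in the proof of Lemma~\ref{J1}. For the first, the sharper inequality $\log(1-u)\leq -u-u^2/2-u^3/3$ (valid on $[0,1)$) yields
$$(1-u)^{\tau+\xi\sqrt{\tau}-n}e^{u\tau}\leq e^{-\xi w^n-w^{2n}/2}\exp\bigl[\delta(w,\tau,\xi)\bigr],$$
where $\delta$ collects the remaining terms; crucially it contains a negative $-w^{3n}/(3\sqrt{\tau})$ which, once $w^n\geq 3|\xi|/2$, dominates any positive $\xi$-dependent piece, while the other contributions are bounded uniformly by a constant depending on $\xi$. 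Hence this integral is also $O(e^{-c\tau^{1/3}})$, and combining with the small-$w$ piece gives $J_3=O(\tau^{-1/2})$.

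The main obstacle is the uniform control of $\delta$ when $\xi<0$: the coefficient $\xi\sqrt{\tau}$ in $h$ threatens to produce an unbounded factor $e^{|\xi|w^{2n}/(2\sqrt{\tau})}$, and it is essential that the cubic Taylor remainder of $\log(1-u)$ supply the negative $-w^{3n}/(3\sqrt{\tau})$ that absorbs this growth on the bulk of the range; the complementary region $w^n<3|\xi|/2$ is a $\tau$-independent bounded interval and is handled by the small-$w$ analysis of the second paragraph.
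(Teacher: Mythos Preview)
Your proof is correct and follows the same overall strategy as the paper's: split the range of integration, control the near region via a Taylor expansion of the exponent $h=\log(1+g)$ in powers of $u=w^n/\sqrt{\tau}$, and kill the far region with the Gaussian tail $e^{-\xi w^n - w^{2n}/2}$. The key difference is the location of the cut-off. The paper changes variables to $x=w^n$, $\theta=\sqrt{\tau}$, and splits at $x=\eta\theta$ for a fixed $\eta\in(0,1)$, i.e.\ at $u=\eta$; this forces them to handle $u$ of order $1$ on the near region, which in turn requires distinguishing the cases $q\leq 0$ and $q>0$ and analysing the auxiliary polynomials $p_k(x,\xi)$ and their zeros $x_k(\xi)$ in order to bound $|\hat g|$. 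Your choice $w^\ast=\tau^{1/(6n)}$, i.e.\ $u=O(\tau^{-1/3})$, keeps $|h|\leq 1$ uniformly on the near region, so the crude bound $|e^h-1|\leq e|h|$ suffices and the polynomial analysis is avoided entirely. The price you pay is that your far region is much larger, but you handle it neatly by bounding $(1+g)e^{-\xi w^n-w^{2n}/2}=(1-u)^{\tau+\xi\sqrt{\tau}-n}e^{u\tau}$ directly via the cubic Taylor inequality $\log(1-u)\leq -u-u^2/2-u^3/3$; the paper instead proves $|\hat g|\leq 1$ on the far region using the same $p_k$ analysis. Both routes work; yours is more elementary, while the paper's yields a pointwise bound $|\hat g|\leq C(x+x^2+x^3)/\theta$ valid on the larger interval $[0,\eta\theta]$, which could be reused elsewhere. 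A minor remark: the threshold $w^n\geq 3|\xi|/2$ only makes the $-w^{3n}/(3\sqrt{\tau})$ term match the single piece $|\xi|w^{2n}/(2\sqrt{\tau})$, not the sum of all positive $\xi$-pieces; but since $w^\ast\to\infty$ any fixed $\xi$-dependent threshold is eventually below $w^\ast$, so the argument goes through with, say, $w^n\geq 3|\xi|$ instead.
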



\begin{proof}
We perform the change of variable $x=w^{n}$, and call $\theta=\sqrt{\tau}$, so that,
\begin{equation}\label{J4xtheta}
J_3=\int_{0}^{\theta}\frac{\hat{g}(x,\theta,\xi)}{nx^{1-1/n}}\;e^{-\xi x - \frac{x^2}{2}}\,dx,
\end{equation}
where,
\begin{align*}
\hat{g}(x,\theta,\xi)&=-1+\left(1-\frac{x}{\theta}\right)^{\theta^2+\xi\theta-n}
\exp\left(x\theta+\xi x +\frac{x^2}{2}\right)\\
&=-1+\exp\left((\theta^2+\xi\theta-n)\log\left(1-\frac{x}{\theta}\right)
+x\theta+\xi x +\frac{x^2}{2}\right)\,.
\end{align*}
It is straightforward to obtain the following three developments, for $0\leq \frac{x}{\theta}<1$:
\begin{align*}
\theta^2\log\left(1-\frac{x}{\theta}\right)+x\theta+\frac{x^2}{2}&=
-\frac{x^3}{\theta}
\sum_{k=0}^{\infty}\frac{1}{k+3}\left(\frac{x}{\theta}\right)^k\,,\\
\xi\theta\log\left(1-\frac{x}{\theta}\right)+\xi x&=
-\frac{\xi x^2}{\theta}
\sum_{k=0}^{\infty}\frac{1}{k+2}\left(\frac{x}{\theta}\right)^k\,,\\
-n\theta\log\left(1-\frac{x}{\theta}\right)&=
\frac{n x}{\theta}
\sum_{k=0}^{\infty}\frac{1}{k+1}\left(\frac{x}{\theta}\right)^k\,.\\
\end{align*}
Thus, by collecting equal powers of $x/\theta$ in the above series, we obtain, for $0\leq x<\theta$,
\begin{equation}\label{qestim}
(\theta^2+\xi\theta-n)\log\left(1-\frac{x}{\theta}\right)
+x\theta+\xi x +\frac{x^2}{2}
=-\frac{x}{\theta}\sum_{k=0}^{\infty}p_k(x,\xi)\left(\frac{x}{\theta}\right)^k\,,
\end{equation}
where
$$
p_k(x,\xi):=\frac{x^2}{k+3}+\frac{\xi x}{k+2}-\frac{n}{k+1}\,.
$$
For $k=1,2,\dots,$ let $x_k(\xi)$ be the largest of the two zeros of $p_k(\cdot,\xi)$, that is,
$$
x_k(\xi)=-\frac{k+3}{2(k+2)}\xi + \sqrt{\frac{(k+3)^2}{4(k+2)^2}\xi^2 + \frac{k+3}{k+1}n}\,,
$$
so that, for $x\geq x_k(\xi)$, we have, $p_k(x,\xi)\geq 0$. By observing that,
as $k\to \infty$, $x_k(\xi)\to -\frac{\xi}{2}+\sqrt{\frac{\xi^2}{4}+n}$, we can conclude that
$x_*(\xi):=\sup x_k(\xi)<+\infty$, and hence,
$$
x\geq x_*(\xi)\quad\Rightarrow\quad \forall k\in\mathbb{N},\; p_k(x,\xi)\geq 0\,.
$$
Therefore, regardless of the sign of $\xi$,
$$
x_*(\xi)\leq x<\theta \quad\Rightarrow\quad 
(\theta^2+\xi\theta-n)\log\left(1-\frac{x}{\theta}\right)
+x\theta+\xi x +\frac{x^2}{2}\leq 0\,,
$$
which in turn implies that,
\begin{equation}\label{xstarg1}
x_*(\xi)\leq x <\theta\quad\Rightarrow\quad |\hat{g}(x,\theta,\xi)|\leq 1\,.
\end{equation}
Now we deal with the exponential term in \eqref{J4xtheta}. If $\xi\geq 0$, then obviously,
for $x\geq 0,$
$e^{-\xi x -\frac{x^2}{2}}\leq e^{-\frac{x^2}{2}}.$ If $\xi <0$, fix any $\omega \in \left]0,1\right[,$ and
define $x_{**}:=\frac{2|\xi|}{1-\omega}.$ It is straightforward to check that,
\begin{equation}\label{xstarstar}
x\geq x_{**}\quad\Rightarrow\quad e^{-\xi x-\frac{x^2}{2}}\leq e^{-\omega\frac{x^2}{2}}\,.
\end{equation}
If $\xi\geq 0$ we take $x_{**}=0$ and $\omega =1$.
Now, pick some $\eta\in\left]0,1\right[$, and consider the splitting $J_3=J_{3,1}+J_{3,2},$ where,
$$
J_{3,1}:=
\int_{0}^{\eta\theta}\frac{\hat{g}(x,\theta,\xi)}{nx^{1-1/n}}\;e^{-\xi x - \frac{x^2}{2}}\,dx\,\quad \text{  and }\quad
J_{3,2}:=
\int_{\eta\theta}^{\theta}\frac{\hat{g}(x,\theta,\xi)}{nx^{1-1/n}}\;e^{-\xi x - \frac{x^2}{2}}\,dx\,.
$$
Consider that $\theta\geq\frac{1}{\eta}\max (x_{*},x_{**})$. Due to \eqref{xstarg1} and \eqref{xstarstar},
$$
|J_{3,2}|\leq
\int_{\eta\theta}^{\theta}\frac{dx}{nx^{1-1/n}}\;e^{- \omega\frac{x^2}{2}}
\leq (1-\eta^{1/n})\theta^{1/n}e^{- \frac{\omega\eta^2}{2}\theta^2},
$$
and thus, if $\omega_*\in \left]0,\frac{\omega\eta^2}{2}\right[,$ we have
\begin{equation}\label{finalJ32}
|J_{3,2}|=o(e^{-\omega_*\theta^2})\,,\quad\text{as}\quad \theta\to +\infty\,.
\end{equation}
Now we estimate $|J_{3,1}|.$ Let $q$ be the left-hand side of \eqref{qestim}, so that,
$|\hat{g}|=|1-e^q|$. Suppose first that $q\leq 0.$ Then, $|\hat{g}|\leq -q,$ which implies, by taking \eqref{qestim} in account,
$$
|\hat{g}(x,\theta,\xi)|\leq \frac{x}{\theta}\left(\frac{x^2}{3}+\frac{|\xi|x}{2}\right)
\frac{1}{1-\frac{x}{\theta}}\leq C_1(x+x^2)\frac{x}{\theta},
$$
for all $0\leq x\leq \eta\theta$, for some positive constant $C_1$ only depending on $\xi$ and $\eta$. 

\medskip

Now, suppose that $q>0.$ We have to find lower estimates for each polynomial $p_k(\cdot,\xi)$.
If $\xi\geq 0$, then the minimum of $p_k(x,\cdot,\xi)$ for $x\in [0,\theta]$ is $p_k(0,\xi)=-\frac{n}{k+1}$. Then,
$$
q\leq \frac{x}{\theta}\sum_{k=0}^\infty\frac{n}{k+1}\left(\frac{x}{\theta}\right)^k\leq
C_2\frac{x}{\theta}\,,
$$
for all $0\leq x\leq \eta\theta$, and for some positive constant $C_2$ only depending on $n$ and $\eta$.
If $\xi<0$, then the minimum of $p_k(x,\xi)$ for $x\in [0,\theta]$ is
$m_k:=p_k\left(\frac{k+3}{2k+4}|\xi|,\xi\right)=-\frac{k+3}{k(k+2)^2}\xi^2-\frac{1}{k+1}n$\,. Then,
$$
q\leq \frac{x}{\theta}\sum_{k=0}^\infty |m_k|\left(\frac{x}{\theta}\right)^k\leq
C_3\frac{x}{\theta}\,,
$$
for all $0\leq x \leq \eta\theta$, for some positive constant $C_3$, only depending on $n,\xi$ and $\eta$. 

For both cases (i) and (ii) we thus obtain, with $C=C_j$, $j=2,3$, accordingly,
$$
|\hat{g}(x,\theta,\xi)|=e^q-1\leq \exp\left(\frac{Cx}{\theta}\right)-1=
\frac{Cx}{\theta}\sum_{k=0}^\infty \frac{1}{(k+1)!}\left(\frac{Cx}{\theta}\right)^k\leq
C_4\frac{x}{\theta}\,,
$$
for all $0\leq x\leq \eta\theta$, for some positive constant $C_4$, only depending on
$n, \xi$ and $\eta$.

\medskip

Taking in account all the previous estimates, we conclude that, regardless the 
sign of $\xi$, there is a positive constant $C_5$ only depending on $n,\xi$ and 
$\eta$ such that, for all $0\leq x\leq \eta\theta$, 
$$
|\hat{g}(x,\theta,\xi)|\leq\frac{C_5}{\theta}(x+x^2+x^3)\,.
$$
Therefore,
\begin{align}\label{finalJ31}\nonumber
|J_{3,1}|&\leq \frac{C_5}{n\theta}\int_0^{\eta\theta}(x^{1/n}+x^{1+1/n}+x^{2+1/n})
e^{-\xi x - \frac{x^2}{2}}dx\\
\nonumber
&\leq  \frac{C_5}{n\theta}\int_0^{+\infty}(x^{1/n}+x^{1+1/n}+x^{2+1/n})
e^{-\xi x - \frac{x^2}{2}}dx\\
&\leq \frac{M}{\theta}\,,
\end{align}
where $M>0$ only depends on $\xi, n$ and $\eta$.
By taking together \eqref{finalJ32} and \eqref{finalJ31}, we obtain $|J_3|=O(\theta^{-1})$, as $\theta\to+\infty,$ and considering the definition of $\theta$, the lemma is proved.
\end{proof}
%
%
\begin{lemma}\label{J4}
$J_4(\xi,\tau)=O(\tau^{-1/2})$, as $\tau\to +\infty$.
\end{lemma}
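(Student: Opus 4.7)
The plan is to exploit the fact that $J_4$ is a product of the two small quantities $f_n$ and $g$ already handled, essentially separately, in the estimates for $J_2$ and $J_3$; since only one of these two factors needs to supply the $O(\tau^{-1/2})$ decay, the estimate for $J_4$ should be inherited from the argument already used for $J_3$ once a uniform bound on $f_n$ is in hand.

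First I would establish a uniform bound $|f_n(\tau)|\leq M$ for all $\tau\geq 0$. By Theorem~\ref{teo:longtime}, $f_n(\tau)=O(\log\tau/\tau)\to 0$ as $\tau\to+\infty$, while $f_n$ is continuous on $[0,\infty)$ with $\lim_{\tau\to 0^+}f_n(\tau)=-1$, because $\widetilde{c}_1$ is bounded (one has $c_1(t)\leq\max\{c_1(0),(\alpha/n)^{1/n}\}$, since $\dot{c}_1<0$ whenever $c_1>(\alpha/n)^{1/n}$). Together these two observations give the required constant $M$.

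Next, I would apply the change of variable $x=w^n$ with $\theta=\sqrt{\tau}$ and the splitting of the integration domain at $x=\eta\theta$ exactly as in the proof of Lemma~\ref{J3}, writing $J_4 = J_{4,1} + J_{4,2}$. On $[0,\eta\theta]$, pulling out the uniform bound $|f_n|\leq M$ and using the pointwise estimate $|\hat{g}(x,\theta,\xi)|\leq\frac{C_5}{\theta}(x+x^2+x^3)$ from the proof of Lemma~\ref{J3}, the chain of inequalities that led to \eqref{finalJ31} goes through verbatim up to a multiplicative factor $M$, giving $|J_{4,1}|=O(\theta^{-1})$. On $[\eta\theta,\theta]$ (assuming $\theta$ large enough that $\eta\theta\geq\max\{x_*(\xi),x_{**}\}$), combining $|f_n|\leq M$ with the bound $|\hat{g}|\leq 1$ from \eqref{xstarg1} and the exponential decay $e^{-\xi x - x^2/2}\leq e^{-\omega x^2/2}$ from \eqref{xstarstar} reproduces the exponentially small estimate \eqref{finalJ32}. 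Adding the two contributions and recalling $\theta=\sqrt{\tau}$ yields the claim.

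The only real subtlety is the uniform boundedness of $f_n$ on \emph{all} of $[0,\infty)$ rather than merely for large $\tau$, because the argument $w^n\sqrt{\tau}$ of $f_n$ inside the integral ranges all the way down to $0$ as $w\to 0^+$; once that is in place, no new analysis of the integrand beyond what has already been carried out for $J_3$ is required.
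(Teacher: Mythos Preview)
Your proposal is correct and follows exactly the same approach as the paper: establish that $f_n$ is uniformly bounded on $[0,\infty)$ (using continuity together with Theorem~\ref{teo:longtime}), then bound $|f_n\cdot g|\leq M|g|$ and recycle the estimates already obtained in the proof of Lemma~\ref{J3}. Your write-up simply fills in more detail (the explicit value of $f_n(0^+)$ and the invariant-region argument for the boundedness of $\widetilde{c}_1$) than the paper's two-sentence proof.
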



\begin{proof}
By the continuity of $f_n$ in $[0,+\infty[$ and Theorem \ref{teo:longtime}, we know that $f_n$ is bounded so that there is $M>0$, which is independent of $w$ and $\tau$ such that, for all $w\geq 0$ and $\tau\geq 0$,
$|f_n(w^n\sqrt{\tau})g(w,\tau,\xi)|\leq M |g(w,\tau,\xi)|$, and therefore we can take the estimates from the proof of lemma \ref{J3} to obtain the result.
\end{proof}

\medskip

For the next result let $\displaystyle\nu_0:=\sum_{k=n}^{\infty}c_k(0).$ 
%
%
\begin{lemma} The following holds true for $\tau$ sufficiently large:
\begin{equation}
\left|n\T^{\frac{1}{2n}}J_ 2+\nu_0\left(\frac{n}{\alpha}\right)^{(n-1)/n}\right| \leq C\T^{-\frac{1}{2}+\frac{1}{2n}}\log\T,
\qquad \text{as $\T\to\infty$}.
\end{equation}
\end{lemma}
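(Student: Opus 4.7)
My plan is to isolate the leading behaviour of $J_2$ by a change of variable that converts the integrand into a form amenable to the dynamical identity $\tilde y'(u)=\tilde c_1^{n-1}(u)$. Substituting $u=w^n\sqrt\tau$ in the definition $J_2(\xi,\tau)=\int_0^{\tau^{1/(2n)}}f_n(w^n\sqrt\tau)e^{-\xi w^n-w^{2n}/2}\,dw$ (as in the substitution used in the proof of Lemma~\ref{J3}) yields
\begin{equation*}
n\tau^{1/(2n)}J_2(\xi,\tau)=\int_0^\tau f_n(u)\,u^{1/n-1}\,\exp\!\bigl(-\xi u/\sqrt\tau-u^2/(2\tau)\bigr)\,du.
\end{equation*}
I would split this as $I(\tau)+R(\tau)$, where $I$ has the exponential replaced by $1$ and $R$ collects the exponential correction. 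Expanding $f_n(u)u^{1/n-1}=-u^{1/n-1}+(n/\alpha)^{(n-1)/n}\tilde c_1^{n-1}(u)$ (since $u^{(n-1)/n+1/n-1}=1$) and using $\tilde y(\tau)-\nu_0=\int_0^\tau\tilde c_1^{n-1}(u)\,du$ gives the exact identity
\begin{equation*}
I(\tau)=-n\tau^{1/n}+\bigl(n/\alpha\bigr)^{(n-1)/n}\bigl(\tilde y(\tau)-\nu_0\bigr),
\end{equation*}
so the claim reduces to showing that the ``phase'' $(n/\alpha)^{(n-1)/n}\tilde y(\tau)-n\tau^{1/n}$ vanishes at the required rate.

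The key step is to prove this refined asymptotic for $\tilde y$ via the rearranged ODE $\tilde y=\alpha/\tilde c_1-n\tilde c_1^{n-1}-\tilde c_1'$ obtained from~\eqref{eq:xy} in $\tau$-time. Writing $\tilde c_1(\tau)=(\alpha/(n\tau))^{1/n}(1+u(\tau))$ with $u(\tau)=O(\log\tau/\tau)$ (from Theorem~\ref{teo:longtime}) and expanding each term, the contributions $\alpha/\tilde c_1$ and $-n\tilde c_1^{n-1}$ combine to $n\tau^{1/n}$ modulo $O(\tau^{-(n-1)/n}\log\tau)$. The term $\tilde c_1'$ must tend to $0$ (otherwise, since $\tilde c_1$ converges, a nonzero limit of $\tilde c_1'$ is incompatible with $\tilde c_1\to 0$); once this is granted, one obtains
\begin{equation*}
\bigl(n/\alpha\bigr)^{(n-1)/n}\tilde y(\tau)-n\tau^{1/n}=O\bigl(\tau^{-(n-1)/n}\log\tau\bigr),
\end{equation*}
which is strictly better than the rate required by the lemma. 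Combined with the identity for $I$ this gives $I(\tau)=-(n/\alpha)^{(n-1)/n}\nu_0+O(\tau^{-(n-1)/n}\log\tau)$.

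For the exponential correction $R(\tau)$, I would split the range of integration at $u\asymp\sqrt\tau$. For $u\leq\sqrt\tau$, the Taylor estimate $|e^{-\xi u/\sqrt\tau-u^2/(2\tau)}-1|\leq C(|\xi|u/\sqrt\tau+u^2/\tau)$ combined with the integrability of $|f_n(u)|u^{1/n}$ and $|f_n(u)|u^{1/n+1}$ on $[0,\infty)$ (using boundedness of $f_n$ near $0$ and the bound $|f_n(u)|\leq C\log u/u$ for large $u$, both consequences of Theorem~\ref{teo:longtime}) gives an $O(\tau^{-1/2})$ contribution. For $u\geq\sqrt\tau$ bound $|e^{\cdots}-1|\leq 1$ and use the tail estimate $\int_{\sqrt\tau}^{\tau}u^{1/n-2}\log u\,du=O(\tau^{-(n-1)/(2n)}\log\tau)$, yielding the dominant bound $O(\tau^{-1/2+1/(2n)}\log\tau)$. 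Together one obtains $R(\tau)=O(\tau^{-1/2+1/(2n)}\log\tau)$, which controls the overall error and gives the claimed inequality.

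The main obstacle is the refined asymptotic for $\tilde y$: Theorem~\ref{teo:longtime} only supplies an $O(\log\tau/\tau)$ correction to $\tilde c_1^{n-1}$, and a naive termwise integration would leave an $O(1)$ ``integration constant'' in $\tilde y(\tau)-n^{1/n}\alpha^{(n-1)/n}\tau^{1/n}$. Proving that this constant actually vanishes is what requires the ODE consistency argument based on $\tilde c_1'\to 0$; all the subsequent integral estimates are routine and mirror those carried out for $J_1$ and $J_3$ in Lemmas~\ref{J1} and~\ref{J3}.
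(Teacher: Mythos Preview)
Your overall strategy matches the paper's: the same substitution $u=w^n\sqrt\tau$, the same split into the ``constant'' piece $I$ and the exponential correction $R$, and the same identity $I(\tau)=-n\tau^{1/n}+(n/\alpha)^{(n-1)/n}(\tilde y(\tau)-\nu_0)$. For the refined asymptotic of $\tilde y$ the paper invokes the center-manifold expansion from \cite{cps} (namely $\tilde c_1\tilde y=\alpha-n\tilde c_1^{n+2}+\cdots$), whereas you use the ODE identity $\tilde y=\alpha/\tilde c_1-n\tilde c_1^{n-1}-\tilde c_1'$. This alternative is attractive and more self-contained, but your treatment of the $\tilde c_1'$ term has a genuine gap: the parenthetical ``a nonzero limit of $\tilde c_1'$ is incompatible with $\tilde c_1\to0$'' only rules out a nonzero limit \emph{assuming a limit exists}; convergence of $\tilde c_1$ does not by itself force $\tilde c_1'$ to converge (or even to be $o(1)$). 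A correct route is to set $E(\tau):=(n/\alpha)^{(n-1)/n}\tilde y(\tau)-n\tau^{1/n}$, use Theorem~\ref{teo:longtime} to get $E'(\tau)=O(\tau^{1/n-2}\log\tau)$, hence $E(\tau)\to E_\infty$ at rate $O(\tau^{1/n-1}\log\tau)$; substituting the ODE expression for $\tilde y$ then shows $\tilde c_1'(\tau)\to -(\alpha/n)^{(n-1)/n}E_\infty$, and \emph{only now} can you invoke $\tilde c_1\to0$ to conclude $E_\infty=0$.

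There is a second slip in your $R(\tau)$ estimate: the functions $|f_n(u)|u^{1/n}$ and $|f_n(u)|u^{1+1/n}$ are \emph{not} integrable on $[0,\infty)$. By Theorem~\ref{teo:longtime} one has $f_n(u)\sim (n-1)^2 n^{-1}\,u^{-1}\log u$, so $|f_n(u)|u^{1/n}\sim Cu^{1/n-1}\log u$ with $1/n-1>-1$ for every $n\ge2$. Consequently the ``$u\le\sqrt\tau$'' piece cannot give $O(\tau^{-1/2})$; carrying the integral to $\sqrt\tau$ produces instead $O(\tau^{-1/2+1/(2n)}\log\tau)$, the same order as your tail term. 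This weaker bound is precisely what the lemma claims, so the final conclusion survives, but the intermediate rate you state is wrong. The paper avoids this by a further splitting of the small-$u$ range (at $\delta\tau^{\tilde\varepsilon-1/2}$), separating the regime where only boundedness of $f_n$ is used from the regime where its $\log u/u$ asymptotic applies.
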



\begin{proof}
We estimate
$$
J_2=\int_0^{\tau^{1/2n}}f_n(w^n\sqrt{\tau})e^{-\xi w^n-\frac{w^{2n}}{2}}dw\,.
$$
By performing the change of variable $w=\frac{x^{1/n}}{\tau^{1/2n}}$, we obtain,
\begin{equation}\label{J2x}
J_2=\frac{1}{n\tau^{1/2n}}\int_0^{\tau}\frac{f_n(x)}{x^{1-1/n}}e^{-\xi \frac{x}{\sqrt{\tau}}-\frac{x^{2}}{2\tau}}dx=\frac{1}{n\tau^{1/2n}}(I_1(\xi,\tau)+I_2(\xi,\tau))\,,
\end{equation}
where,
\begin{align*}
I_1(\xi,\tau)&=\int_0^{\tau}\frac{f_n(x)}{x^{1-1/n}}dx\\
I_2(\xi,\tau)&=\int_0^{\tau}\frac{f_n(x)}{x^{1-1/n}}(e^{-\xi \frac{x}{\sqrt{\tau}}-\frac{x^{2}}{2\tau}}-1)dx\,.
\end{align*}
By \eqref{fndef},
\begin{align}
I_1&=\int_{0}^{\tau}\left[\left(\frac{n}{\alpha}\right)^{1-\frac{1}{n}}(\tilde{c}_1(x))^{n-1}-x^{\frac{1}{n}-1}\right]\,dx\nonumber\\
&=\left(\frac{n}{\alpha}\right)^{1-\frac{1}{n}}\int_{0}^{\tau}(\tilde{c}_1(x))^{n-1}dx-n\tau^{\frac{1}{n}}\,.\label{J2I1}
\end{align}
 On the other hand, from \eqref{eq:xy} we have the equation,
$$
\frac{d}{d\tau}\sum_{k=n}^{\infty}\tilde{c}_k(\tau)=(\tilde{c}_1(\tau))^{n-1},
$$
from which we obtain, for each $\tau>0,$
$$
\int_0^{\tau}(\tilde{c}_1(x))^{n-1}\,dx = -\nu_0 + \sum_{k=n}^{\infty}\tilde{c}_k(\tau)\,.
$$
Now, we estimate the asymptotics of the sum in the second member of this equality.
First, we observe that, from Theorem~\ref{teo:longtime},
$$
\tilde{c}_1(\tau)=\left(\frac{\alpha}{n\tau}\right)^{1/n}\left(1+(1-\tfrac{1}{n})\frac{\log\tau}{\tau}
+ o\left(\frac{\log\tau}{\tau}\right)\right)\,,
$$
as $\tau\to+\infty.$ But, by the center manifold estimate (3.6) in Proposition 3 of 
\cite{cps}, and considering the definitions of the variables therein, we obtain, as $\tau\to+\infty,$
$$
\tilde{c}_1(\tau)\sum_{k=n}^{\infty}\tilde{c}_k(\tau) = 
\alpha - n(\tilde{c}_1(\tau))^{n+2} + \frac{n(n-1)}{\alpha^2}(\tilde{c}_1(\tau))^{2n+2} + 
O\left((\tilde{c}_1(\tau))^{2n+4}\right)\,.
$$
From the two previous expressions we deduce that, as $\tau\to\infty,$
$$
\sum_{k=n}^{\infty}\tilde{c}_k(\tau)=
\alpha\left(\frac{n}{\alpha}\right)^{1/n}\tau^{1/n} - \alpha\left(\frac{n}{\alpha}\right)^{1/n}
(1-\tfrac{1}{n})\tau^{-1+1/n}\log\tau + O\left(\tau^{-1-1/n}\right)\,.
$$
By using this in \eqref{J2I1},
\begin{align*}
I_1(\xi,\tau)&=-\nu_0\left(\frac{n}{\alpha}\right)^{1-\frac{1}{n}}+\left(\frac{n}{\alpha}\right)^{1-\frac{1}{n}}\sum_{k=n}^{\infty}\tilde{c}_k(\tau)-n\tau^{\frac{1}{n}}\\
&= -\nu_0\left(\frac{n}{\alpha}\right)^{(n-1)/n}\!\! - (n-1)\tau^{-1+\frac{1}{n}}\log\tau + 
O\left(\tau^{-1-1/n}\right)\,.
\end{align*}
Now, it remains to estimate 
\[
I_2(\xi,\tau)=\int_0^{\tau}\frac{f_n(x)}{x^{1-1/n}}\bigl(e^{-\xi \frac{x}{\sqrt{\tau}}-\frac{x^{2}}{2\tau}}-1\bigr)dx.
\]
Let us start by observing that $1-e^{-s} \leq \min\{1, s\}$. To apply this to the exponential in $I_2$ observe
that $s_+:= -\xi + \sqrt{\xi^2+2}$ is the value of $s = \frac{x}{\sqrt{\tau}}>0$ for which $\xi s + \frac{1}{2}s^2 = 1$. 
Thus, we can start by estimating $I_2$ as follows:
\begin{equation}
|I_2(\xi,\tau)| \leq \int_0^{\sqrt{\tau}s_+}\left|\frac{f_n(x)}{x^{1-1/n}}\right|
\left(|\xi| \frac{x}{\sqrt{\tau}}+\frac{x^{2}}{2\tau}\right)dx +
\int_{\sqrt{\tau}s_+}^{\tau}\left|\frac{f_n(x)}{x^{1-1/n}}\right|dx. \label{bound1I_2}
\end{equation}
Now, since in the second integral in \eqref{bound1I_2} we have $x> \sqrt{\tau}s_+$, if $\tau$ is sufficiently large
we can substitute the asymptotic form of $f_n(x)$ obtained from using \eqref{longtime} in \eqref{fndef}, in order to write
\begin{eqnarray}
\int_{\sqrt{\tau}s_+}^{\tau}\left|\frac{f_n(x)}{x^{1-1/n}}\right|dx & = & 
\int_{\sqrt{\tau}s_+}^{\tau}\Bigl((n-1)\left(1-\textstyle{\frac{1}{n}}\right)\frac{\log x}{x^{2-1/n}} + 
o\bigl(\textstyle{\frac{\log x}{x^{2-1/n}}}\bigr)\Bigr)dx   \label{applyf1} \\
& \leq & C\T^{-\frac{1}{2}+\frac{1}{2n}}\int_{1}^{\sqrt{\tau}/s_+}\textstyle{\frac{\log u}{u^{2-1/n}}}du ,\nonumber
\end{eqnarray}

\noindent
where $C$ is a constant depending on $n$ and $\xi$ (through $s_+$) and from now on can vary from line to line.
Using  the monotonicity of the logarithm to write $\log u < \log \sqrt{\tau}/s_+$ we can continue the estimate above as
\begin{eqnarray}
& \leq & C\T^{-\frac{1}{2}+\frac{1}{2n}}\log \tau\int_{1}^{\sqrt{\tau}/s_+}\textstyle{\frac{1}{u^{2-1/n}}}du \nonumber \\
& \leq & C\T^{-\frac{1}{2}+\frac{1}{2n}}\log \tau \,. \label{secondintegralbound}
\end{eqnarray}

\medskip

Now let us consider the first integral in \eqref{bound1I_2}. It is convenient to write it as an integral
between fixed limits:
\begin{eqnarray}
\lefteqn{\int_0^{\sqrt{\tau}s_+}\left|\frac{f_n(x)}{x^{1-1/n}}\right|
\left(|\xi| \frac{x}{\sqrt{\tau}}+\frac{x^{2}}{2\tau}\right)dx =} \nonumber \\
 & = & \int_0^1\left|\frac{f_n(u\sqrt{\tau}s_+)}{(u\sqrt{\tau}s_+)^{1-1/n}}\right|
\left(|\xi| s_+u+\frac{1}{2}s_+^2u^2\right)\sqrt{\T}s_+du.  \label{firstintegralfixed}
\end{eqnarray}

\medskip

In order to apply the asymptotic form of $f_n$, as in \eqref{applyf1} we need to decompose the integral in
\eqref{firstintegralfixed} into a sum of two integrals such that in one we can apply the result about 
the asymptotic behaviour of $f_n$ and the other can be adequately controlled. 

Fix $\widetilde{\varepsilon}, \delta>0$. In \eqref{firstintegralfixed} 
write $\int_0^1 = \int_0^{\delta/\T^{-\widetilde{\varepsilon}+1/2}} + 
\int_{\delta/\T^{-\widetilde{\varepsilon}+1/2}}^1.$
Then we have

\begin{eqnarray}
\lefteqn{\int_0^{\delta/\T^{-\widetilde{\varepsilon}+1/2}}\left|\frac{f_n(u\sqrt{\tau}s_+)}{(u\sqrt{\tau}s_+)^{1-1/n}}\right|
\left(|\xi| s_+u+\frac{1}{2}s_+^2u^2\right)\sqrt{\T}s_+du = } \nonumber \\
& = & (\sqrt{\tau}s_+)^{\frac{1}{n}}\int_0^{\delta/\T^{-\widetilde{\varepsilon}+1/2}}\left|f_n(u\sqrt{\tau}s_+)\right|
\left(|\xi| s_++\tfrac{1}{2}s_+^2u\right)u^{\frac{1}{n}}du \nonumber \\
& \leq & C \tau^{\frac{1}{2n}}\int_0^{\delta/\T^{-\widetilde{\varepsilon}+1/2}}\left(|\xi| s_+u^\frac{1}{n}
+\tfrac{1}{2}s_+^2u^{1+\frac{1}{n}}\right)du \label{Est2} \\
& \leq & C\tau^{-\frac{1}{2}+\varepsilon}, \label{Est3}
\end{eqnarray}
with $\varepsilon = (1+\frac{1}{n})\widetilde{\varepsilon}$,
where \eqref{Est2} is obtained by using the boundedness of $f_n$, due to its definition \eqref{fndef},
and \eqref{Est3} is valid since we have $\T$  with $\frac{\delta}{\T^{-\widetilde{\varepsilon}+1/2}}<1$,
otherwise the decomposition into two integrals would not make sense.
Furthermore, again denoting by the same letter $C$ constants that can be different, we have

\begin{eqnarray}
\lefteqn{\int_{\delta/\T^{-\widetilde{\varepsilon}+1/2}}^1\left|\frac{f_n(u\sqrt{\tau}s_+)}{(u\sqrt{\tau}s_+)^{1-1/n}}\right|
\left(|\xi| s_+u+\frac{1}{2}s_+^2u^2\right)\sqrt{\T}s_+du \leq } \nonumber \\
& \leq & C\int_{\delta/\T^{-\widetilde{\varepsilon}+1/2}}^1\left(\frac{|\log(u\sqrt{\T}s_+)|}
{(u\sqrt{\T}s_+)^{2-\frac{1}{n}}}\right)\left(|\xi| s_+u+\frac{1}{2}s_+^2u^2\right)\sqrt{\T}s_+du \nonumber \\
& \leq & C \T^{-\frac{1}{2}+\frac{1}{2n}}\int_{\delta/\T^{-\widetilde{\varepsilon}+1/2}}^1\frac{|\log u|}{u^{2-\frac{1}{n}}}
\left(|\xi| s_+u+\frac{1}{2}s_+^2u^2\right)du + \label{p1} \\
& & \quad + C\T^{-\frac{1}{2}+\frac{1}{2n}}\log \T
\int_{\delta/\T^{-\widetilde{\varepsilon}+1/2}}^1\frac{1}{u^{2-\frac{1}{n}}}
\left(|\xi| s_+u+\frac{1}{2}s_+^2u^2\right)du. \label{p2}
\end{eqnarray}
Since both  integrals
\[
\int_0^1\frac{|\log u|}{u^{2-\frac{1}{n}}}
\left(|\xi| s_+u+\frac{1}{2}s_+^2u^2\right)du
\qquad\text{and}\qquad
\int_0^1\frac{1}{u^{2-\frac{1}{n}}}
\left(|\xi| s_+u+\frac{1}{2}s_+^2u^2\right)du
\]
are convergent, we conclude from \eqref{p1} and \eqref{p2} that for $\tau$ sufficiently large we have
\begin{equation}
\int_{\delta/\T^{-\widetilde{\varepsilon}+1/2}}^1\left|\frac{f_n(u\sqrt{\tau}s_+)}{(u\sqrt{\tau}s_+)^{1-1/n}}\right|
\left(|\xi| s_+u+\frac{1}{2}s_+^2u^2\right)\sqrt{\T}s_+du \leq C\T^{-\frac{1}{2}+\frac{1}{2n}}\log \T . \label{Est4}
\end{equation}
Finally, from \eqref{secondintegralbound}, \eqref{Est3}, and \eqref{Est4} we conclude that for $\tau$ sufficiently large
we have
\[|I_2(\xi,\T)| \leq C\T^{-\frac{1}{2}+\frac{1}{2n}}\log\tau .\] 
Finally, we can  conclude the estimate of the behaviour of $J_2$ proceeding from \eqref{J2x}.
%
%
\end{proof}

%
%
\begin{theorem} \label{final}
With the notations and hypothesis introduced above in this paper we have for $\tau$ sufficiently large that 
\begin{eqnarray*}
\displaystyle\left|\tau^{1/2n}
\left(\varphi(\tau+\xi\sqrt{\tau},\tau)-\Phi_{2,n}(\xi)\right) + 
e^{-\xi^2/2}\left(\frac{\alpha}{n}\right)^{1/n}\frac{\nu_0}{\alpha}\right|
& \leq &
 C\T^{-\frac{1}{2}+\frac{1}{2n}}\log \T.
\end{eqnarray*}
\end{theorem}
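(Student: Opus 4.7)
The plan is to assemble the four estimates from Lemmas \ref{P}, \ref{J1}, \ref{J3}, \ref{J4}, together with the estimate for $J_2$ from the previous lemma, into the stated bound. The starting point is the decomposition $\varphi(\tau+\xi\sqrt\tau,\tau) = P(\xi,\tau)\sum_{j=1}^4 J_j(\xi,\tau)$ from \eqref{e5}, together with the observation that the target profile is exactly $\Phi_{2,n}(\xi) = e^{-\xi^2/2}\int_0^{+\infty}e^{-\xi w^n-w^{2n}/2}dw$, i.e.\ the product of the leading terms of $P$ and $J_1$.

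First, I would combine Lemmas \ref{P} and \ref{J1} to write
\[
P(\xi,\tau)J_1(\xi,\tau) = \Phi_{2,n}(\xi) + O(\tau^{-1/2}),
\]
since $P = e^{-\xi^2/2}(1+O(\tau^{-1/2}))$ and $J_1$ differs from its limit by an exponentially small term, while that limit itself is $O(1)$ in $\xi$. Multiplying by $\tau^{1/2n}$ yields $\tau^{1/2n}(PJ_1-\Phi_{2,n}) = O(\tau^{-1/2+1/2n})$, which is absorbed into the stated error. Next, Lemmas \ref{J3} and \ref{J4} give $J_3,J_4 = O(\tau^{-1/2})$, and since $P$ is bounded by Lemma \ref{P}, the contributions $\tau^{1/2n}PJ_3$ and $\tau^{1/2n}PJ_4$ are likewise $O(\tau^{-1/2+1/2n})$ and thus absorbed.

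The only term producing a nonvanishing limit after multiplication by $\tau^{1/2n}$ is $J_2$. The last lemma states
\[
n\tau^{1/2n}J_2(\xi,\tau) + \nu_0\left(\frac{n}{\alpha}\right)^{(n-1)/n} = O(\tau^{-1/2+1/2n}\log\tau).
\]
Using once more $P = e^{-\xi^2/2} + O(\tau^{-1/2})$ and boundedness of $\tau^{1/2n}J_2$, I would deduce
\[
\tau^{1/2n}P(\xi,\tau)J_2(\xi,\tau) = -\frac{e^{-\xi^2/2}\nu_0}{n}\left(\frac{n}{\alpha}\right)^{(n-1)/n} + O(\tau^{-1/2+1/2n}\log\tau).
\]
The elementary identity $\frac{1}{n}(n/\alpha)^{(n-1)/n} = \frac{1}{\alpha}(\alpha/n)^{1/n}$ recasts the constant in the precise form appearing in the theorem. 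Summing the four contributions gives the claim.

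The proof is essentially bookkeeping: no new analytic estimate is required beyond the lemmas. The one point requiring care is that the auxiliary errors (from $J_1$, $J_3$, $J_4$, and from the $O(\tau^{-1/2})$ remainder in Lemma \ref{P} multiplied by the $O(1)$ constant in $J_2$) are all of order $\tau^{-1/2}$, while the dominant error of order $\tau^{-1/2+1/2n}\log\tau$ coming from $J_2$ is strictly larger for $n\geq 2$; hence all subsidiary errors are absorbed into the right-hand side. The main obstacle, really the only one, is this careful identification and matching of constants between the $(n/\alpha)^{(n-1)/n}/n$ coming from the $J_2$ lemma and the $(\alpha/n)^{1/n}/\alpha$ that appears in the theorem statement.
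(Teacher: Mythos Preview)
Your proposal is correct and is precisely the intended argument: the paper's own proof consists of the single sentence ``The result follows directly from the lemmata \ref{J1}--\ref{J4},'' and what you have written is simply a careful unpacking of that sentence, including the verification of the constant via the identity $\tfrac{1}{n}(n/\alpha)^{(n-1)/n}=\tfrac{1}{\alpha}(\alpha/n)^{1/n}$. There is no difference in approach.
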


\medskip

\begin{proof}
The result follows directly from the lemmata \ref{J1}--\ref{J4}. 
\end{proof}

\medskip

If there are no clusters initially present on the crystal facet, $c_j(0)=0$ for $j\geq n$, and thus $\nu_0=0.$
In this case, since we know that $\t = j\Delta_j$ with $\Delta_j = 1 + o(1)$ as $j\to\infty$ (see section~\ref{sec:rateestinit}),
we immediately conclude that Theorem~\ref{final} reduces to Theorem~\ref{teo:monomer}.


\section{Conclusion}\label{sec:conclusion}

To conclude the proof of Theorem~\ref{teo:main} it is enough to put together the result given in
Theorem~\ref{final} with the estimate \eqref{finalsec3} proved in section~\ref{sec:rateestinit} for 
the contribution of the initial condition. Under the assumption of Theorem~\ref{teo:main} that the 
initial condition $c_j(0)$ decays like $j^{-\mu}$, we conclude that the slowest decaying term 
arising from $\varphi$ decays like $j^{-\frac{1}{2n}}$, while by \eqref{finalsec3} we see that
the contribution coming from the initial data is in all cases never faster than this decay. Thus,
the contribution coming from the initial condition is 
the dominant one for the decay rate, which proves Theorem~\ref{teo:main}.

\medskip

We end by observing that Theorem~\ref{teo:main} implies that when the tail of the initial condition decays sufficiently slow
at infinity, with a rate $\mu \in \bigl(1-\frac{1}{n}, 1\bigr)$,  the rate of convergence to the similarity profile of 
the solution of the deposition model \eqref{system} will depend on $\mu$. This lack of forgetfulness of the
rate of convergence to the scaling profiles about some features of the initial condition
was also observed recently in \cite{cps} for this deposition model
along noncritical directions $\eta \neq 1$. So, although the similarity profile itself lacks any information
about the initial condition, the rate at which it is approached by the solutions of \eqref{system} can still
convey some information.


\bibliographystyle{amsplain}

\end{document}